\newtheorem{theorem}{Theorem}[section]
\newtheorem{lemma}[theorem]{Lemma}
\newtheorem{theoremvoid}{Theorem} 
\newtheorem{conjecture}[theorem]{Conjecture}
\theoremstyle{definition}
\newtheorem{example}[theorem]{Example}
\newtheorem{remark}[theorem]{Remark}
\numberwithin{equation}{section}
\def\&{\wedge}
\newcommand{\calI}{{\mathcal I}}
\newcommand{\bfu}{{\mathbf u}}
\newcommand{\bfv}{{\mathbf v}}
\newcommand{\bfx}{{\mathbf x}}
\newcommand{\e}{{\mathbf e}}
\newcommand{\R}{\mathbb{R}}
\begin{document}

\title{Beltrami fields with nonconstant proportionality factor}

\author{Jeanne N. Clelland}
\address{Department of Mathematics, 395 UCB, University of
Colorado, Boulder, CO 80309-0395}
\email{Jeanne.Clelland@colorado.edu}

\author{Taylor Klotz}
\address{Department of Mathematics, 395 UCB, University of
Colorado, Boulder, CO 80309-0395}
\email{Taylor.Klotz@colorado.edu}


\subjclass[2010]{35A01, 35A02, 35N10, 37C10, 58A15}
\keywords{Beltrami fields, moving frames, exterior differential systems}

\thanks{The first author was supported in part by NSF grant DMS-1206272 and a Collaboration Grant for Mathematicians from the Simons Foundation.
}

\begin{abstract}
We consider the question raised by Enciso and Peralta-Salas in \cite{EPS16}: What nonconstant functions $f$ can occur as the proportionality factor for a Beltrami field $\bfu$ on an open subset $U \subset \R^3$?  We also consider the related question: For any such $f$, how large is the space of associated Beltrami fields?  By applying Cartan's method of moving frames and the theory of exterior differential systems, we are able to improve upon the results given in \cite{EPS16}.  In particular, the answer to the second question depends crucially upon the geometry of the level surfaces of $f$. We conclude by giving a complete classification of Beltrami fields that possess either a translation symmetry or a rotation symmetry.
\end{abstract}

\maketitle

\section{Introduction}

A {\em Beltrami field} on an open set $U \subset \R^3$ is a vector field $\bfu$ on $U$ that is a solution to the PDE system
\begin{equation}
\text{curl\ } \bfu = f \bfu, \qquad \text{div\ } \bfu = 0  \label{Beltrami-def}
\end{equation}
for some smooth function $f:U \to \R$, called the {\em proportionality factor}.  When $f$ is constant, the divergence-free condition is redundant and $\bfu$ is called a {\em strong Beltrami field}.  Strong Beltrami fields are well-studied; see, e.g., \cite{EPS12}, \cite{EPS15}.

In \cite{EPS16} and \cite{EPS18}, the authors undertake a study of Beltrami fields on open subsets of $\R^3$, with a primary focus on Beltrami fields with nonconstant proportionality factor.  Both local and global issues are considered, and the most significant result is that Beltrami fields of this type are {\em rare}, in the sense that most nonconstant functions $f$ cannot occur as the proportionality factor for any nonvanishing Beltrami field, even locally.  Specifically, the following theorem is proved in \cite{EPS16}:

\begin{theoremvoid}
Let $U \subset \R^3$ be an open set, and assume that the function $f:U \to \R^3$ is nonconstant and of class $C^{6,\alpha}$.  There is a nonlinear partial differential operator $P \neq 0$, which can be computed explicitly and involves derivatives of order at most $6$, such that \eqref{Beltrami-def} has no nonzero solutions $\bfu$ unless $P[f]$ is identically zero in $U$.  In particular, \eqref{Beltrami-def} has no nonzero solutions $\bfu$ for all $f$ in an open and dense subset of $C^k(U)$ for any $k \geq 7$.
\end{theoremvoid}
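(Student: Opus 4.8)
The plan is to treat \eqref{Beltrami-def} as an overdetermined system for the unknown vector field $\bfu$ and to prolong it until the space of admissible $\bfu$ at each point is forced to have rank zero unless the jets of $f$ satisfy a single scalar relation. First I would extract the crucial first-order consequence: applying $\nabla\cdot$ to $\nabla\times\bfu=f\bfu$ gives $0=\nabla f\cdot\bfu+f\,\nabla\cdot\bfu=\nabla f\cdot\bfu$, so every Beltrami field is tangent to the level surfaces of $f$. Thus on the open set where $\nabla f\neq 0$ the unknown already lives in a rank-$2$ subbundle, namely the tangent planes to the level surfaces. I would then adapt an orthonormal moving frame $e_1,e_2,e_3$ with $e_3=\nabla f/|\nabla f|$ and $e_1,e_2$ tangent to the level surfaces, and write $\bfu=a\,e_1+b\,e_2$ for unknown functions $a,b$.

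Second, I would rewrite the two PDEs in this frame. The connection $1$-forms $\omega^i_j$ of the adapted coframe are determined by $f$: differentiating $e_3=\nabla f/|\nabla f|$ shows that $\omega^3_1,\omega^3_2$ encode the second fundamental form of the level surfaces and hence depend on the $2$-jet of $f$, while their derivatives bring in higher jets. Substituting $\bfu=a\,e_1+b\,e_2$ into $\nabla\cdot\bfu=0$ and $\nabla\times\bfu=f\bfu$ then yields a first-order linear system in $(a,b)$ whose coefficients are built from $f$, $|\nabla f|$, and the connection coefficients, i.e.\ from the $2$-jet of $f$. (The vector Helmholtz relation $-\Delta\bfu=\nabla f\times\bfu+f^2\bfu$, obtained by taking $\nabla\times$ of the first equation, is a second-order consequence that the prolongation will reproduce automatically.) Here the geometry of the level surfaces enters the problem in an essential way, exactly as the abstract asserts.

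Third comes the heart of the argument: prolongation and elimination. The frame equations express the derivatives of $a$ and $b$ algebraically in terms of $a,b$ and the jets of $f$; differentiating these relations and imposing equality of mixed partials—the compatibility conditions—produces new \emph{algebraic} relations among $a$ and $b$ with coefficients in the successive jets of $f$. I expect that after finitely many prolongations one obtains strictly more independent linear equations in the pair $(a,b)$ than the two unknowns; the existence of a nonzero solution $\bfu$ then forces the vanishing of the relevant maximal minors of the coefficient matrix, and I would collect the resulting scalar as the operator $P[f]$. Tracking the derivative count—each prolongation raising the jet order, the coefficients starting at order $2$—is what should pin the order of $P$ at no more than $6$.

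Finally, I would verify $P\neq 0$ by evaluating it on a single explicit $f$ (say a generic quadratic, or a function whose level surfaces are deliberately asymmetric) and checking $P[f]\not\equiv 0$; this shows $P$ is a genuine, nonzero nonlinear differential operator of order $\leq 6$. The open-and-dense consequence is then essentially formal: the assignment $f\mapsto P[f]$ is continuous from the $C^6$ topology to $C^0$, so $\{f:P[f]\not\equiv 0\}$ is open, and density follows because $P$ is a nonzero polynomial in the order-$\leq 6$ jet variables, so any $f\in C^k(U)$ with $k\geq 7$ can be perturbed by a small localized bump to realize a jet at which $P$ does not vanish. The main obstacle is the middle step: carrying out the prolongation cleanly enough to see that the system genuinely becomes overdetermined, and reading off $P$ together with a proof that it is not identically zero—equivalently, identifying precisely which curvature invariants of the level surfaces of $f$ must conspire for a nonzero Beltrami field to exist.
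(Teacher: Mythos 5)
First, a point of orientation: the statement you are proving is not proved in this paper at all --- it is quoted from \cite{EPS16}, and the paper's own work begins downstream of it. That said, your plan is essentially the same strategy that both \cite{EPS16} and the present paper follow: extract $\nabla f\cdot\bfu=0$, adapt an orthonormal frame with $\e_3=\nabla f/|\nabla f|$, reduce to a first-order linear system for the two tangential components, and then prolong until pointwise linear relations in those components accumulate, so that a nonzero solution forces determinants built from the jets of $f$ to vanish. This is exactly the skeleton of \S\S 4--5, where the relations you anticipate appear concretely as the torsion-absorption conditions \eqref{torsion-first-pass} and \eqref{torsion-second-pass}.

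Two concrete corrections to the middle step, which you rightly identify as the crux. First, it is not true that ``the frame equations express the derivatives of $a$ and $b$ algebraically in terms of $a,b$ and the jets of $f$'': the system \eqref{first-order-PDE-sys} gives only $4$ equations for the $6$ first covariant derivatives of $(u^1,u^2)$, so two tangential derivative combinations (the paper's $p_1,p_2$) survive as free unknowns and must be carried through the elimination; this is why the first compatibility condition \eqref{torsion-first-pass} is a relation among $p_1,u^1,u^2$ rather than among $u^1,u^2$ alone, and it is what stretches the order of $P$ out to $6$. Second, your claim that finitely many prolongations always yield more independent equations than unknowns cannot hold identically in $f$: when the level surfaces of $f$ lie in parallel planes or concentric spheres, every compatibility condition is satisfied identically and the solution space is infinite-dimensional (Case 1 of \S 5), so the coefficient matrix whose minors you propose to take is identically zero for such $f$. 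That is consistent with the theorem (those $f$ simply have $P[f]=0$), but it means the assertion $P\neq 0$ genuinely rests on the explicit computation you defer to the end; note also that exhibiting an $f$ admitting no Beltrami field (such as $f=\phi(\theta)$) does not by itself certify $P[f]\not\equiv 0$, since the theorem's implication only runs one way --- you must evaluate the operator $P$ you actually constructed. The architecture of your argument is correct and matches the source, but it is a program rather than a proof until the elimination is carried out and a witness for $P\neq 0$ is verified.
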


This is clearly an important result; unfortunately, the operator $P$ is extremely cumbersome to compute.  Moreover, this necessary condition for $f$ is almost certainly not sufficient; the proof of the theorem shows that there is, in fact, a hierarchy of differential constraints that the function $f$ must satisfy provided that it is sufficiently smooth.  So it remains an open question precisely which proportionality factors $f$ can occur for a nonzero Beltrami field $\bfu$.

In this paper, we seek to further explore the question raised in \cite{EPS16}, namely, 

\noindent {\bf Question 1:} Which nonconstant functions $f$ can occur as proportionality factors for nonzero Beltrami fields?  

We will also consider the related question:

\noindent {\bf Question 2:} For each such function $f$, how large is the space of associated Beltrami fields?

We will approach these questions from the point of view of adapted orthonormal frame fields on $\R^3$ and exterior differential systems, and the Cartan-K\"ahler theorem (essentially a geometric version of the Cauchy-Kowalevski theorem) will play an important role in analyzing solution spaces.  (See \cite{BCG3} or \cite{CFB} for a comprehensive introduction to these topics.) Due to the limitations imposed by these tools, we will consider only locally defined, real analytic functions $f$ and Beltrami fields $\bfu$.  Therefore, topological constraints such as those discussed in \cite{EPS16} will not play any role here.  Moreover, any statement along the lines of ``assume that $X$ is nonzero" should be interpreted as ``assume that $X$ is not identically zero and restrict to the open set where $X$ is nonzero." 

Specifically, we will assume that $f$ is a nonvanishing, real analytic function on an open set $U \subset \R^3$, with $\nabla f \neq 0$ on $U$.  (Since $f$ is nonconstant, these conditions hold on a dense open subset of $U$, and we shrink $U$ accordingly if necessary.)  The Beltrami fields on $U$ with proportionality factor $f$ are the solutions $\bfu$ to the PDE system \eqref{Beltrami-def} on $U$.  We will say that a nonvanishing function $f$ {\em admits a nonzero Beltrami field} if there exists a nonzero solution $\bfu$ to the system \eqref{Beltrami-def}.

Cartan's theory has the advantage of reducing analytic questions regarding the solution space of the PDE system \eqref{Beltrami-def} to algebraic computations.  Unfortunately, some of these algebraic computations remain intractable to us, even with the assistance of computer algebra packages such as {\sc Maple}, and so we are unable to give as explicit an answer to these questions as we might like.  Nevertheless, these methods allow us to prove the following results:

\begin{theorem}\label{all-Beltrami-theorem}
The space of all Beltrami fields on $U$ is locally parametrized by 3 functions of 2 variables. 
\end{theorem}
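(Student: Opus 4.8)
The plan is to recast \eqref{Beltrami-def} in the language of differential forms and adapted moving frames, encode it as an exterior differential system on an orthonormal frame bundle, and read off the generality of its integral manifolds from the Cartan--K\"ahler theorem. Passing from $\bfu$ to its metric-dual $1$-form $\alpha = \bfu^\flat$ and writing $*$ for the Hodge star of the flat metric, the system becomes $d\alpha = f*\alpha$ and $d{*\alpha} = 0$. Since $\bfu$ is nonvanishing, I would adapt an orthonormal frame with $e_1 = \bfu/|\bfu|$, so that $\alpha = \lambda\,\omega^1$ and $*\alpha = \lambda\,\omega^2\wedge\omega^3$, where $\lambda = |\bfu| > 0$ and $\omega^1,\omega^2,\omega^3$ is the dual coframe. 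Working on the orthonormal frame bundle of $\R^3$ with $\lambda$ and $f$ adjoined as fiber coordinates, and using the flat structure equations $d\omega^i = -\omega^i_j\wedge\omega^j$ and $d\omega^i_j = -\omega^i_k\wedge\omega^k_j$, the system becomes the differential ideal $\mathcal I$ generated by the $2$-form $\Theta_1 = d(\lambda\omega^1) - f\lambda\,\omega^2\wedge\omega^3$ and the $3$-form $\Theta_2 = d(\lambda\,\omega^2\wedge\omega^3)$, with independence condition $\omega^1\wedge\omega^2\wedge\omega^3 \neq 0$.

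Before computing, I would record that $d\Theta_1 \equiv -\,df\wedge{*\alpha} \pmod{\mathcal I}$, so every integral manifold satisfies $df\wedge{*\alpha} = 0$, i.e. $\nabla f\cdot\bfu = 0$: the field $\bfu$ is automatically tangent to the level surfaces of $f$. This lets me kill the residual rotational ($SO(2)$) freedom in $(e_2,e_3)$ by aligning $e_3$ with $\nabla f$, producing a gauge-free reduced system. I would then expand $\omega^1_2,\omega^1_3,\omega^2_3,d\lambda,df$ in the coframe on a generic integral element; the relations imposed by $\Theta_1 = \Theta_2 = 0$ leave a linear Pfaffian system whose tableau I would put in involution after absorbing torsion. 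Computing the reduced Cartan characters $s_1 \ge s_2 \ge s_3$ for a generic flag and checking Cartan's test (that the prolongation satisfies $\dim A^{(1)} = s_1 + 2s_2 + 3s_3$), I expect to find $s_3 = 0$ and $s_2 = 3$. The last nonvanishing character being $s_2 = 3$, the Cartan--K\"ahler theorem then yields that the general real-analytic integral manifold, hence the general Beltrami field on $U$, depends on $3$ functions of $2$ variables.

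The main obstacle I anticipate is the involutivity verification combined with the honest removal of gauge: the unreduced frame-bundle system carries the spurious $SO(2)$ symmetry, which inflates the apparent generality and must be eliminated before the characters are trustworthy, and one must confirm that the reduced tableau is already involutive rather than requiring a prolongation. As an independent check on the final count I would observe that the system is of genuine Cauchy--Kowalevski type. Choosing coordinates $(x^1,x^2,x^3)$ on $U$ so that the $x^1$-component $u_1$ of $\bfu$ is nonzero, the $x^1$-component of $\mathrm{curl}\,\bfu = f\bfu$ reads $\partial_2 u_3 - \partial_3 u_2 = f\,u_1$ and involves no $x^1$-derivatives, so it determines $f$ algebraically from the restriction of $\bfu$ to $\{x^1 = 0\}$; the two remaining components of the curl equation together with $\mathrm{div}\,\bfu = 0$ then solve for $\partial_1 u_1, \partial_1 u_2, \partial_1 u_3$. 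Prescribing $\bfu|_{\{x^1=0\}}$---precisely $3$ functions of $2$ variables---therefore determines a unique local analytic Beltrami field, confirming $s_2 = 3$ and $s_3 = 0$.
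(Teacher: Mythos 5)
Your proposal is correct, but only its final paragraph is actually a proof, and that paragraph is precisely the paper's argument: the paper likewise observes that one component of $\text{curl\ }\bfu = f\bfu$ contains no derivatives in the chosen ``time'' direction and so determines $f$ algebraically (it uses $x^3$ as time and divides by $u^3$ where you use $x^1$ and divide by $u^1$), puts the remaining three equations in Cauchy form, and invokes Cauchy--Kowalevski with initial data $\bfu|_{\{x^3=0\}}$, i.e.\ $3$ functions of $2$ variables. Your primary route---an exterior differential system on the frame bundle with $\lambda$ and $f$ adjoined, adapted so $\e_1 = \bfu/|\bfu|$ and then $\e_3 \parallel \nabla f$---is a genuinely different and plausible plan (and your derivation of $\nabla f \cdot \bfu = 0$ from $d\Theta_1 \equiv -df\wedge{*\alpha}$ matches the paper's later observation in \S\ref{setup-sec}), but as written it is a sketch: the Cartan characters $s_2=3$, $s_3=0$ are ``expected'' rather than computed, involutivity is flagged as an open obstacle rather than verified, and the reduction would silently exclude the constant-$f$ and vanishing-$\bfu$ loci. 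The paper deliberately avoids this machinery here, reserving the frame-adapted EDS analysis for the harder problem of counting Beltrami fields with a \emph{fixed} nonconstant $f$ (\S\ref{analysis-sec}), where the Cauchy--Kowalevski shortcut is unavailable because the system becomes overdetermined; for the present theorem the elementary Cauchy-form argument is both sufficient and cleaner.
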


\begin{theorem}\label{constant-Beltrami-theorem}
The space of all Beltrami fields on $U$ with constant proportionality factor is locally parametrized by 2 functions of 2 variables. 
\end{theorem}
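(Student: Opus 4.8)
The plan is to realize the Beltrami system with constant proportionality factor as an exterior differential system $\calI$ on a suitable manifold and to apply the Cartan--K\"ahler theorem, working in the same adapted-frame framework used for Theorem~\ref{all-Beltrami-theorem}. Because $\bfu$ is nonvanishing, I adapt an orthonormal coframe $(\w^1,\w^2,\w^3)$ on an open subset of the orthonormal frame bundle of $\R^3$ so that $\bfu = u\,\e_1$ for a positive function $u$, with connection forms $\w^i_j = -\w^j_i$ satisfying the flat equations $d\w^i = -\w^i_j\wedge\w^j$ and $d\w^i_j = -\w^i_k\wedge\w^k_j$. Writing $\bet = u\,\w^1$ for the metric dual of $\bfu$, the equations $\operatorname{curl}\bfu = f\bfu$ and $\operatorname{div}\bfu = 0$ become $d\bet = f\,(\ast\bet)$ and $d(\ast\bet) = 0$, which expand into pointwise-linear relations among $du$, the $\w^i_j$, and $f$.

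To pass to the constant case I adjoin the $1$-form $df$ to $\calI$, so that $df$ pulls back to zero on every integral manifold and $f \equiv \lam$ is forced to be constant; since the standing hypothesis makes $f$ nonvanishing, $\lam \neq 0$, and then $\operatorname{div}\bfu = 0$ is a differential consequence of $\operatorname{curl}\bfu = \lam\bfu$, so $\calI$ is effectively generated by the curl relations together with $df$. The first step is to recompute the reduced structure equations: in the unconstrained problem the compatibility identity $\nabla f\cdot\bfu = 0$ already forces the $\e_1$-derivative of $f$ to vanish, and imposing $df = 0$ additionally kills the two transverse derivatives of $f$. I then absorb torsion and, if needed, prolong once, recording how setting these derivatives to zero modifies the tableau inherited from Theorem~\ref{all-Beltrami-theorem}.

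Next I compute the reduced Cartan characters $s_1, s_2, s_3$ of the resulting tableau along a generic integral flag and verify Cartan's test, namely that the dimension of the space of admissible integral elements equals $s_1 + 2s_2 + 3s_3$; this certifies involutivity. I expect to find $s_3 = 0$ and $s_2 = 2$, so that by the Cartan--K\"ahler theorem the local integral manifolds --- equivalently, the Beltrami fields on $U$ with constant proportionality factor --- are parametrized by $s_2 = 2$ functions of $2$ variables. As a consistency check, for constant $\lam$ the system implies the vector Helmholtz equation $\Delta\bfu + \lam^2\bfu = 0$, whose Cauchy data on a noncharacteristic surface consists of exactly $2$ functions of $2$ variables; this is the degenerate baseline of the phenomenon emphasized in the abstract, where for \emph{nonconstant} $f$ the size of the solution space is governed by the geometry of the level surfaces of $f$.

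The main obstacle is the involutivity analysis after imposing $df = 0$. Adjoining $df$ generically produces new torsion --- integrability conditions surfacing from $d(df) = 0$ once it is rewritten using the reduced relations --- and the crux is to show that, after absorbing torsion and at most one prolongation, the system is genuinely involutive with last nonzero character $s_2 = 2$, rather than generating a further hierarchy of differential constraints on $\bfu$. Correctly identifying $s_2 = 2$, in contrast to the value $3$ in the unconstrained case of Theorem~\ref{all-Beltrami-theorem}, is precisely where the computational work lies, and it is the same sort of tableau bookkeeping that the introduction warns can become intractable by hand.
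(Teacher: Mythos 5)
There is a genuine gap: your argument is a plan whose decisive step is never carried out. The entire content of the theorem is the value of the last nonzero Cartan character, and you write only that you ``expect to find $s_3=0$ and $s_2=2$'' and that verifying involutivity after adjoining $df$ ``is precisely where the computational work lies.'' Asserting the expected answer without absorbing the torsion, checking the integrability conditions coming from $d(df)=0$ against the curl relations, and actually running Cartan's test is not a proof; it is exactly the computation that would constitute one. Your ``consistency check'' does not rescue this: for constant $\lambda$ the system does imply $\Delta\bfu+\lambda^2\bfu=0$, but the Cauchy data for the \emph{vector} Helmholtz equation on a noncharacteristic surface is $3+3=6$ functions of $2$ variables (three components and their normal derivatives), not $2$; the reduction to $2$ is precisely what the first-order Beltrami relations impose, so the check is circular at best and miscounted as stated. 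You also exclude $f\equiv 0$ by appeal to a ``standing hypothesis'' of nonvanishing $f$, but that hypothesis belongs to the nonconstant analysis later in the paper; the theorem as stated includes $f=0$, where the solutions are $\bfu=\nabla F$ with $F$ harmonic (which happens to give the same count, but the case must be treated).

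For contrast, the paper's proof avoids the moving-frame and EDS machinery entirely here. For $f=c\neq 0$ it takes the first, second, and fourth equations of the coordinate system \eqref{Beltrami-sys-expanded} as a Cauchy system in the $x^3$-direction and treats the third equation as a single first-order constraint on the initial data $(\bar u^1,\bar u^2,\bar u^3)$ on $\{x^3=0\}$; a direct computation shows the constraint propagates, so the solution space is $3$ functions of $2$ variables cut down by one first-order relation, i.e.\ $2$ functions of $2$ variables, and the extra parameter $c$ contributes negligibly. If you want to complete your EDS route, you must exhibit the reduced tableau after imposing $df=0$, show the new torsion absorbs (this is where the compatibility of the overdetermined system lives), and verify Cartan's test with $s_1=2$, $s_2=2$ wait --- rather, with last nonzero character $s_2=2$; until that is done the argument establishes nothing beyond the upper bound already given by Theorem \ref{all-Beltrami-theorem}.
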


Taken together, Theorems \ref{all-Beltrami-theorem} and \ref{constant-Beltrami-theorem} imply that a generic Beltrami field has nonconstant proportionality factor.

\begin{theorem}\label{nonconstant-Beltrami-theorem}
Let $f$ be a nonvanishing function on $U$ with $\nabla f \neq 0$ on $U$. 
\begin{itemize}
\item If the level surfaces of $f$ are totally umbilic (i.e., open subsets of planes or spheres), then $f$ admits no nonzero Beltrami fields unless the level surfaces of $f$ are contained in either parallel planes or concentric spheres, which case the solution space of \eqref{Beltrami-def} is locally parametrized by 2 functions of 1 variable.  (Note that in the case of concentric spheres, our restrictions imply that the common center of the spheres may not be contained in $U$.)
\item If the level surfaces of $f$ contain no umbilic points, then $f$ admits at most a 3-dimensional space of Beltrami fields.  \end{itemize}
\end{theorem}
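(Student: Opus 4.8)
The plan is to translate the Beltrami system into an exterior differential system for an adapted orthonormal coframe and then analyze it by prolongation and the Cartan--K\"ahler theorem, letting the geometry of the level surfaces govern the outcome. First I would reduce to the tangential case: taking the divergence of $\operatorname{curl}\bfu = f\bfu$ gives $0 = \operatorname{div}(f\bfu) = \nabla f\cdot\bfu + f\operatorname{div}\bfu = \nabla f\cdot\bfu$, so every Beltrami field is tangent to the level surfaces of $f$. I would then build an orthonormal coframe $(\w^1,\w^2,\w^3)$ with dual frame $(\e_1,\e_2,\e_3)$ in which $\w^3 = df/|\nabla f|$, so that the level surfaces are the integral surfaces of $\w^3=0$; this reduces the structure group to the $SO(2)$ acting on $(\e_1,\e_2)$. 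Writing $\bfu \leftrightarrow \alpha = u_1\w^1 + u_2\w^2$ (the $\w^3$-component vanishing by the previous step), the system becomes $d\alpha = f\star\alpha$ and $d\star\alpha = 0$, where $\star$ is the Hodge star. Expanding these with $d\w^i = -\w^i_j\wedge\w^j$ yields four first-order scalar equations for $u_1,u_2$: the $\w^2\wedge\w^3$ and $\w^3\wedge\w^1$ components of $d\alpha=f\star\alpha$ express the normal derivatives $\e_3(u_1),\e_3(u_2)$ algebraically in terms of $u_1,u_2$, the second fundamental form $h_{ij}$, and $f$, while the vanishing $\w^1\wedge\w^2$-component together with $d\star\alpha=0$ prescribes $\e_1(u_2)-\e_2(u_1)$ and $\e_1(u_1)+\e_2(u_2)$ (a ``curl'' and a ``divergence'' along the leaves).

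Because only the two normal derivatives and two tangential combinations are fixed, the system is overdetermined, and I would study it by prolongation. The essential input is the structure equations of the reduced coframe: the identity $d^2f=0$ forces the $\w^3$-coefficients of $\w^1_3,\w^2_3$ to equal $a_1:=\e_1\log|\nabla f|$ and $a_2:=\e_2\log|\nabla f|$, and the flatness equations $d\w^i_j = -\w^i_k\wedge\w^k_j$ encode the Gauss and Codazzi relations of the foliation. Differentiating the four equations and substituting these relations produces integrability conditions whose character depends decisively on the principal curvatures $\kap_1,\kap_2$.

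In the umbilic case $\kap_1=\kap_2=\kap$, I would first note that Codazzi forces $\kap$ to be constant on each leaf, so the leaves are pieces of planes ($\kap=0$) or spheres ($\kap\neq 0$). The residual obstruction is carried by $a_1,a_2$, which measure the failure of the normal line congruence to be geodesic. I expect the prolongation to show that when $(a_1,a_2)\neq(0,0)$ the integrability conditions are incompatible unless $u_1=u_2\equiv 0$, whereas $a_1=a_2=0$ is precisely the condition that the leaves be parallel planes or concentric spheres; in that degenerate case the reduced system should be involutive, with a Cartan character computation giving last nonzero character $s_1=2$, i.e.\ a solution space parametrized by $2$ functions of $1$ variable, which Cartan--K\"ahler then certifies.

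In the non-umbilic case $\kap_1\neq\kap_2$ I would further reduce the frame to the now well-defined principal directions, eliminating the $SO(2)$ freedom entirely. Here I expect the integrability conditions to pin down the previously free symmetric first derivatives in terms of $u_1,u_2$ and a single prolongation variable, so that repeated prolongation closes the system into a linear total differential (Frobenius) system $d\Phi = \Omega\,\Phi$ on a rank-$3$ bundle with fibre built from $u_1,u_2$; the solution space is then the space of parallel sections, of dimension at most $3$, with equality exactly when the curvature $d\Omega-\Omega\wedge\Omega$ vanishes (a further closed condition on $f$). The main obstacle throughout is this prolongation/integrability bookkeeping: it requires the full Gauss--Codazzi data of the level-surface foliation and is heaviest in the non-umbilic branch, where verifying that the prolonged tableau reaches finite type --- and identifying the precise rank-$3$ structure --- is the crux of the argument.
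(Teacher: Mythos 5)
Your proposal follows essentially the same route as the paper: the adapted coframe with $\omega^3 = df/|\nabla f|$, tangency of $\bfu$ to the level sets, the resulting overdetermined four-equation system for $(u^1,u^2)$, and a torsion/prolongation analysis whose case split is governed by umbilicity and by the quantities $a_i = \e_i\log|\nabla f|$ measuring non-geodesy of the normal congruence, reaching the same conclusions (an involutive tableau with $s_1=2$ for parallel planes or concentric spheres, nonexistence otherwise in the umbilic case, and an at-most-rank-$3$ total-differential bound in the non-umbilic case). The one caveat is that the step you defer as an expectation---incompatibility when $(a_1,a_2)\neq(0,0)$ on totally umbilic leaves---is where the paper's proof does its heaviest work, introducing explicit coordinates adapted to the normal line congruence (respectively to the curve of sphere centers), differentiating the torsion relation along $\e_3$, and checking that the resulting polynomial identity in the transverse coordinates has a leading coefficient (such as $4f^3\kappa^7v^5$) that cannot vanish; your plan is correct, but that computation is the crux of the first bullet.
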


In addition, we conjecture the following:

\begin{conjecture}\label{nonconstant-Beltrami-conjecture}
Nonconstant proportionality factors $f$ admitting a nonzero Beltrami field have the following properties:
\begin{itemize}
\item The space of proportionality factors $f$ admitting a nonzero Beltrami field is locally parametrized by 3 functions of 2 variables.
\item If the level surfaces of $f$ contain no umbilic points, then $f$ admits at most a 2-dimensional space of Beltrami fields. 
\item A generic proportionality factor $f$ that admits a nonzero Beltrami field admits exactly a 1-dimensional space of Beltrami fields.
\end{itemize}
\end{conjecture}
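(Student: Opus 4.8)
The plan is to treat all three bullets with a single exterior differential system and then extract each statement by projection or by restriction to a fiber. Following the setup used for Theorems~\ref{all-Beltrami-theorem}--\ref{nonconstant-Beltrami-theorem}, I would work on the orthonormal frame bundle of $U$ and adapt frames to the level surfaces of $f$, taking $\e_3 = \nabla f/|\nabla f|$ and $\e_1,\e_2$ tangent to the surfaces $\Sigma_c = \{f=c\}$. The elementary but decisive observation is that any Beltrami field satisfies $\bfu\cdot\nabla f = 0$ (differentiate $\operatorname{div}(\operatorname{curl}\bfu)=0$), so $\bfu = u_1\e_1 + u_2\e_2$ is tangent to each $\Sigma_c$, and moreover $\operatorname{curl}\bfu = f\bfu$ is tangent as well. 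Writing $\alpha = \bfu^\flat$, this forces the pullback $\beta := \alpha|_{\Sigma_c}$ to be a closed $1$-form on each level surface; iterating (each even curl restricts to a closed form, since $\operatorname{curl}^2\bfu = |\nabla f|\,J\bfu + f^2\bfu$ with $J$ the rotation by $\pi/2$ in the tangent plane) produces a hierarchy of closed $1$-forms whose mutual relations encode the PDE. I would package all of this — the components $u_1,u_2$, the first and second fundamental forms of the $\Sigma_c$ (the connection form $\w_{12}$ and the shape operator $h_{ij}$), and the jets of $f$ — as a linear Pfaffian system $\calI$ whose integral manifolds are exactly the pairs $(f,\bfu)$ solving \eqref{Beltrami-def}.

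For the first bullet I would argue by fiber-counting rather than by a direct prolongation. The assignment $\bfu \mapsto f = (\operatorname{curl}\bfu\cdot\bfu)/|\bfu|^2$ is a real-analytic map from the space of nonvanishing Beltrami fields onto the set of admissible $f$, and its generic fiber is precisely the space of Beltrami fields sharing a fixed $f$. By Theorems~\ref{all-Beltrami-theorem} and \ref{constant-Beltrami-theorem} the domain of nonconstant-$f$ fields is still locally parametrized by $3$ functions of $2$ variables, the constant-$f$ fields forming the strictly smaller family of $2$ functions of $2$ variables. If the third bullet holds, the generic fiber is finite ($1$-dimensional), so the projection drops only the lowest Cartan character and the image retains $s_2 = 3$; hence the admissible $f$ are parametrized by $3$ functions of $2$ variables. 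The only thing to verify is that the projection has locally constant rank on a dense open set, which is automatic for real-analytic maps away from a proper analytic subset.

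For the second and third bullets I would fix $f$ and analyze the linear system for $(u_1,u_2)$. Two of the four scalar equations — the vanishing of the $\e_3$-component of $\operatorname{curl}\bfu$ (equivalently $d\beta=0$) and $\operatorname{div}\bfu=0$ — involve only tangential derivatives and are constraints along each $\Sigma_c$, while the $\e_1,\e_2$-components of $\operatorname{curl}\bfu=f\bfu$ prescribe the normal derivatives $\e_3(u_1),\e_3(u_2)$. Propagating the two constraints in the normal direction and using $d\beta_2 = dh\wedge(*_{\Sigma}\beta) + h\,(\operatorname{div}_{\Sigma}\bfu)\,dA = 0$, where $h=|\nabla f|$ restricted to $\Sigma_c$ is generally nonconstant, yields successive differential-algebraic consequences. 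When the $\Sigma_c$ have no umbilic points the shape operator has distinct eigenvalues, so there is a canonical line-of-curvature coframing in which these consequences become honest algebraic relations among $u_1,u_2$ and finitely many of their derivatives; prolonging $\calI$ to involution and reading off the number $s_0$ of free constants then bounds the fiber dimension. Theorem~\ref{nonconstant-Beltrami-theorem} already yields $s_0 \le 3$; the claim is that one further prolongation contributes an independent relation (forcing $s_0\le 2$), and that for generic $f$ a subsequent relation is also independent (forcing $s_0 = 1$), with the non-generic $2$-dimensional cases being exactly the highly symmetric fields — consistent with the translation- and rotation-symmetric classifications.

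The hard part is precisely the step the authors flag as intractable: verifying that these successive relations are genuinely independent. Concretely, the reduction from $3$ to $2$ and from $2$ to $1$ amounts to showing that an explicit matrix built from the curvature invariants of the $\Sigma_c$ — their principal curvatures, the derivatives of $h=|\nabla f|$ along lines of curvature, and the Gauss curvature — has the expected rank on a dense open set, but the polynomial entries grow rapidly under prolongation and resist elimination even with computer algebra. I would attack this by choosing the line-of-curvature coframe that diagonalizes the shape operator, which should make the leading term of each new relation a nonvanishing multiple of the difference of principal curvatures, so that non-umbilicity is exactly what guarantees independence; isolating this leading behavior, rather than carrying the full expressions, is the most promising route to closing the gap between $s_0\le 3$ and the conjectured bounds.
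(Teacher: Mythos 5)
The statement you are trying to prove is a \emph{conjecture}: the paper deliberately does not prove it, and says explicitly that the obstructions are computational (solving large polynomial/PDE systems). What the paper offers instead is evidence: for the first bullet, the heuristic at the end of \S\ref{Cauchy-sec}, where arbitrary initial data $\bar f_0,\bar f_1,\bar f_2$ (3 functions of 2 variables) is matched against a \emph{determined} system \eqref{system-for-initial-conditions} of 3 PDEs for the Cauchy data $\bar u^1,\bar u^2,\bar u^3$; for the second and third bullets, the case analysis of \S\ref{analysis-sec} (Cases 3.1--3.3), where the $3$-dimensional case can only occur if $f$ satisfies the overdetermined system given by the identical vanishing of \eqref{torsion-second-pass} together with all its differential consequences, which the authors believe (but cannot show) has no solutions. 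Your proposal retraces this architecture --- adapted frames, the tangency $\bfu\cdot\nabla f=0$, the linear Pfaffian system, successive torsion/compatibility relations --- and then stops at exactly the same wall, which you acknowledge. So it is not a proof, and it does not supply the missing computation; at best it is an alternative plausibility argument.

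Beyond that, three concrete defects. First, your argument for the first bullet is circular: you derive it \emph{assuming} the third bullet (generically $1$-dimensional fibers), which is itself part of the conjecture; and even granting that, there is no constant-rank or quotient theorem in the Cartan--K\"ahler generality calculus that lets you conclude ``image of (3 functions of 2 variables) by a map with finite-dimensional generic fibers is again 3 functions of 2 variables.'' The paper avoids this by arguing directly on initial data. Second, the second bullet is a \emph{universal} statement over all $f$ with umbilic-free level surfaces, not a generic one; your strategy of showing that a matrix of curvature invariants ``has the expected rank on a dense open set'' can at best prove a generic statement, and the exceptional $f$ where the rank drops are precisely the candidates for a $3$-dimensional solution space. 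What is actually needed is nonexistence of solutions $f$ to the overdetermined PDE system of Case 3.1. Third, your guess that each successive relation has leading coefficient a nonvanishing multiple of the difference of principal curvatures is contradicted by the paper's computation: that is true at the first stage (the coefficient of $p_1$ in \eqref{torsion-first-pass} is $2(h_{11}-h_{22})$), but at the second stage the pivot coefficient in \eqref{torsion-second-pass} is $2k_3-f$, which is unrelated to umbilicity --- indeed Case 3.2 of the paper is exactly the non-umbilic situation where this coefficient vanishes. So non-umbilicity alone cannot drive the reduction from $3$ to $2$ to $1$, and the proposed shortcut of ``isolating the leading behavior'' is aimed at the wrong invariant.
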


Again, we wish to emphasize that the obstacles to proving this conjecture are computational rather than theoretical in nature.  Specifically, a complete proof would require computing the real-valued solution spaces to large systems of polynomial equations, and we have so far been unable to carry these computations to completion.

This paper is organized as follows:
\begin{itemize}
\item In \S \ref{Cauchy-sec}, we apply the Cauchy-Kowalevski theorem to the PDE system \eqref{Beltrami-def} in standard coordinates to compute the size of the space of {\em all} local Beltrami fields, regardless of proportionality factor.  This computation provides a proof of Theorem \ref{all-Beltrami-theorem} and also gives some insight into the size of the space of proportionality factors $f$ that admit solutions.
\item In \S \ref{f-constant-sec}, we apply the Cauchy-Kowalevski theorem to compute the size of the space of local Beltrami fields with constant proportionality factor $f$.  This computation provides a proof of Theorem \ref{constant-Beltrami-theorem}.
\item In \S \ref{setup-sec}, we assume that $f$ is nonconstant and choose an orthonormal frame field $(\e_1, \e_2, \e_3)$ on $U$ that is adapted to the geometry of the level surfaces of $f$.  We then reformulate the PDE system \eqref{Beltrami-def} in terms of this frame field and its dual coframe field $(\omega^1, \omega^2, \omega^3)$. As a consequence, we will see that the system \eqref{Beltrami-def} may be regarded as a system of 4 equations for only 2 unknown functions on $U$ rather than the 3 component functions of $\bfu$ that appear in \eqref{Beltrami-def}.
\item In \S \ref{example-sec}, we consider some specific examples of proportionality factors $f$ and use the ideas developed in \S \ref{setup-sec} to compute the spaces of Beltrami fields that they admit.  These examples provide some intuition as to what sorts of behavior we might expect in general.
\item In \S \ref{analysis-sec}, we define an exterior differential system whose integral manifolds are in one-to-one correspondence with Beltrami fields on $U$ with proportionality factor $f$.  
We then apply Cartan's methods to analyze this exterior differential system and its integral manifolds in general.  In the course of this analysis, we will see how the geometry of the level surfaces of $f$ plays a crucial role in the existence and size of the space of integral manifolds. 
\item In \S \ref{symmetry-sec}, we consider the special case of Beltrami fields that possess either a translation symmetry or a rotation symmetry.  In both cases, the symmetry assumption simplifies the PDE system defining Beltrami fields sufficiently to allow a complete classification of local Beltrami fields with these symmetries.
\end{itemize}

\section{The space of Beltrami fields via Cauchy-Kowalevski}\label{Cauchy-sec}

In this section, we give a proof of Theorem \ref{all-Beltrami-theorem}.
We begin by writing \eqref{Beltrami-def} explicitly as a first-order system for the coordinate functions $(u^1, u^2, u^3)$ of $\bfu$:
\begin{equation}\label{Beltrami-sys-expanded}
\begin{gathered}
\frac{\partial u^2}{\partial x^3} - \frac{\partial u^3}{\partial x^2} = f u^1, \\
\frac{\partial u^3}{\partial x^1} - \frac{\partial u^1}{\partial x^3} = f u^2, \\
\frac{\partial u^1}{\partial x^2} - \frac{\partial u^2}{\partial x^1} = f u^3, \\
\frac{\partial u^1}{\partial x^1} + \frac{\partial u^2}{\partial x^2} + \frac{\partial u^3}{\partial x^3} = 0.
\end{gathered}
\end{equation}
Observe that the function $f$ can be eliminated from these equations to obtain a determined system of 3 equations for the 3 unknown functions $(u^1, u^2, u^3)$, and that this system can be put into Cauchy form:
\begin{equation}\label{Beltrami-sys-no-f}
\begin{aligned}
\frac{\partial u^1}{\partial x^3} & = \frac{\partial u^3}{\partial x^1} - \frac{u^2}{u^3} \left( \frac{\partial u^1}{\partial x^2} - \frac{\partial u^2}{\partial x^1} \right), \\
\frac{\partial u^2}{\partial x^3} & =  \frac{\partial u^3}{\partial x^2} + \frac{u^1}{u^3} \left( \frac{\partial u^1}{\partial x^2} - \frac{\partial u^2}{\partial x^1} \right), \\
\frac{\partial u^3}{\partial x^3} & = -\left( \frac{\partial u^1}{\partial x^1} + \frac{\partial u^2}{\partial x^2} \right).
\end{aligned}
\end{equation}
The initial value problem for this system is well-posed as long as we choose initial data 
\[ u^i(x^1, x^2, 0) = \bar{u}^i(x^1, x^2), \qquad 1 \leq i \leq 3 \]
along the plane $\{x^3 = 0\}$ satisfying $\bar{u}^3(x^1, x^2) \neq 0$, which, by reordering the functions $u^i$ if necessary, we may assume without loss of generality.  
By the Cauchy-Kowalevski theorem, the space of solutions---which corresponds to {\em all} Beltrami fields with all possible proportionality factors $f$---is locally parametrized by 3 functions of 2 variables.  
This completes the proof of Theorem \ref{all-Beltrami-theorem}.

Observe that, given any solution $\bfu = (u^1, u^2, u^3)$ to the system \eqref{Beltrami-sys-no-f}, we can recover the proportionality factor $f$ from the equation
\begin{equation}\label{f-from-solution}
f = \frac{1}{u^3} \left( \frac{\partial u^1}{\partial x^2} - \frac{\partial u^2}{\partial x^1} \right). 
\end{equation}
In particular, the local generality of proportionality factors $f$ admitting a nonzero Beltrami field is bounded above by the local generality of Beltrami fields---i.e., 3 functions of 2 variables.

\begin{remark}
Recall that the main result of \cite{EPS16} states that any proportionality factor $f$ admitting a nonzero Beltrami field must satisfy a certain partial differential equation of order at most 6.  The function count above shows that this necessary condition is not sufficient, as the space of solutions to a 6th order PDE is locally parametrized by 6 functions of 2 variables.
\end{remark}

Conversely, we can give a heuristic argument suggesting that the space of proportionality factors $f$ admitting a nonzero Beltrami field is locally parametrized by precisely 3 functions of 2 variables, rather than by some smaller space.  Starting with equation \eqref{f-from-solution} and making use of equations \eqref{Beltrami-sys-no-f}, we can compute expressions for $\frac{\partial f}{\partial x^3}$ and $\frac{\partial^2 f}{\partial (x^3)^2}$ in terms of the functions $u^1, u^2, u^3$ and their derivatives with respect to $x^1$ and $x^2$.  Restricting to the initial plane $\{x^3 = 0 \}$ yields a system of the form
\begin{equation}\label{system-for-initial-conditions}
\begin{aligned}
f(x^1, x^2, 0) & = \frac{1}{\bar{u}^3} \left( \frac{\partial \bar{u}^1}{\partial x^2} - \frac{\partial \bar{u}^2}{\partial x^1} \right) , \\
\frac{\partial f}{\partial x^3}(x^1, x^2, 0) & = D_1(\bar{u}^1, \bar{u}^2, \bar{u}^3), \\
\frac{\partial^2 f}{\partial (x^3)^2}(x^1, x^2, 0) & = D_2(\bar{u}^1, \bar{u}^2, \bar{u}^3), 
\end{aligned}
\end{equation}
where $D_1, D_2$ are partial differential operators involving only derivatives with respect to $x^1$ and $x^2$, applied to the initial data $\bar{u}^i$.  Now suppose that we specify initial data for $f$ and its first two $x^3$-derivatives of the form
\[ f(x^1, x^2, 0) = \bar{f}_0(x^1, x^2), \qquad \frac{\partial f}{\partial x^3}(x^1, x^2, 0) = \bar{f}_1(x^1, x^2), \qquad \frac{\partial^2 f}{\partial (x^3)^2}(x^1, x^2, 0) = \bar{f}_2(x^1, x^2), \]
where the functions $\bar{f}_0, \bar{f}_1, \bar{f}_2$ are arbitrary.  Substituting these expressions into the system \eqref{system-for-initial-conditions} yields a system of 3 PDEs for the 3 unknown functions $\bar{u}^1, \bar{u}^2, \bar{u}^3$.  Generically, we might expect this system to admit local solutions---at least in the real analytic category---for arbitrary choices of the functions $\bar{f}_i$; moreover, these solutions should be unique up to the choice of some initial data along a submanifold of strictly lower dimension (e.g., a curve in the $(x^1, x^2)$ plane).  The functions $\bar{u}^1, \bar{u}^2, \bar{u}^3$ then determine a unique solution $\bfu = (u^1, u^2, u^3)$ of the Cauchy system \eqref{Beltrami-sys-no-f}, which in turn determines a unique proportionality factor $f$ via equation \eqref{f-from-solution}. 

This argument provides evidence for the first statement in Conjecture \ref{nonconstant-Beltrami-conjecture}, strongly suggesting that the space of proportionality factors admitting a nonzero Beltrami field is locally parametrized by exactly 3 functions of 2 variables.

\section{Beltrami fields with constant proportionality factor}\label{f-constant-sec}

In this section, we give a proof of Theorem \ref{constant-Beltrami-theorem}.  To this end, suppose that $f$ is a constant function.

First, suppose that $f=0$.  In this case, it is straightforward to show that the general solution to \eqref{Beltrami-def} is
\[ \bfu = \nabla F, \]
where $F: U \to \R$ is a harmonic function on $U$; i.e., $\Delta F = 0$.  The space of harmonic functions---and hence the space of Beltrami fields with $f=0$---is locally parametrized by 2 functions of 2 variables.

Next, suppose that $f = c$ is a nonzero constant.  In this case, the first, second, and fourth equations of \eqref{Beltrami-sys-expanded} may be written as a system in Cauchy form with respect to the derivatives in the $x^3$ direction, with the third equation representing a constraint on the initial data.  A straightforward computation shows that this overdetermined system is {\em compatible}: If the initial data satisfies the constraint, then this constraint holds throughout the entire solution to the Cauchy system.  Since the constraint on the initial data is given by a first-order PDE, the space of solutions again depends locally on 2 functions of 2 variables.

For purposes of measuring the size of the solution space, this collection of ``2 functions of 2 variables for each constant $c \in \R$" still counts as 2 functions of 2 variables; the additional real constant $c$ in the initial data may be thought of as ``1 function of 0 variables," which contributes negligibly to the size of the solution space.  Thus, the space of all Beltrami fields with constant proportionality factor is locally parametrized by 2 functions of 2 variables.  This completes the proof of Theorem \ref{constant-Beltrami-theorem}.

Taken together, Theorems \ref{all-Beltrami-theorem} and \ref{constant-Beltrami-theorem} imply that the space of Beltrami fields with constant proportionality factor has ``measure zero" in the overall space of solutions to the system \eqref{Beltrami-sys-no-f}.  (This is an infinite-dimensional analog to the notion that a 2-dimensional submanifold of a 3-dimensional manifold has measure zero inside the larger manifold.)
Therefore, a generic Beltrami field must have nonconstant proportionality factor  $f$.  For the remainder of this paper, we will assume that $f$ is a given, nonconstant function and restrict to the open set $U$ where $f$ and $\nabla f$ are both nonzero.

\section{A geometric approach via adapted frame fields}\label{setup-sec}

In order to recast the system \eqref{Beltrami-def} in the language of exterior differential systems, we will make use of the canonical identifications between vector fields and differential forms in $\R^3$. Specifically, a vector field
\[ \bfv = v^1 \frac{\partial}{\partial x^1} + v^2 \frac{\partial}{\partial x^2} + v^3 \frac{\partial}{\partial x^3} \]
may be canonically identified with either the 1-form
\[ \alpha_{\bfv} = v^1\, dx^1 + v^2\, dx^2 + v^3 \, dx^3 \]
or the 2-form
\[ \beta_{\bfv} = v^1\, dx^2 \wedge dx^3 + v^2\, dx^3 \wedge dx^1 + v^3\, dx^1 \wedge dx^2. \]
Similarly, a real-valued function $f$ may either be regarded as a 0-form or canonically identified with the 3-form $\gamma_f = f\, dx^1 \wedge dx^2 \wedge dx^3$.  

Moreover, these identifications may be described more generally with respect to any orthonormal frame field: 
Let $(\e_1, \e_2, \e_3)$ be any oriented, orthonormal frame field on an open set $U \subset \R^3$, with dual coframe field $(\omega^1, \omega^2, \omega^3)$.  If $\bfv$ is a vector field on $U$ given by $\bfv = v^i \e_i$, then these identifications take the form
\begin{gather*}
\alpha_{\bfv}  = v^1\, \omega^1 + v^2\, \omega^2 + v^3\, \omega^3, \\
\beta_{\bfv}  = v^1\, \omega^2 \wedge \omega^3 + v^2\, \omega^3 \wedge \omega^1 + v^3\, \omega^1 \wedge \omega^2.
\end{gather*}
Similarly, if $f$ is a real-valued function on $U$, then
\[ \gamma_f = f\, \omega^1 \wedge \omega^2 \wedge \omega^3. \]

Under these identifications, the usual differential operators of vector calculus are all given by exterior differentiation:
\begin{itemize}
\item For any smooth function $f$, the vector field $\nabla f$ is identified with the 1-form $df$.
\item For any vector field $\bfv$ identified with the 1-form $\alpha_{\bfv}$, the vector field $\nabla \times \bfv$ is identified with the 2-form $d\alpha_{\bfv}$.
\item For any vector field $\bfv$ identified with the 2-form $\beta_{\bfv}$, the vector field $\nabla \cdot \bfv$ is identified with the 3-form $d\beta_{\bfv}$.
\end{itemize}

It follows that the system \eqref{Beltrami-def} is equivalent to the equations
\begin{equation}\label{Beltrami-eqs-form-version}
d\alpha_{\bfu} = f \beta_{\bfu}, \qquad d\beta_{\bfu} = 0,
\end{equation} 
where $\alpha_{\bfu}, \beta_{\bfu}$ represent the canonical identifications of $\bfu$ with a 1-form and 2-form, respectively.  As written, these equations are basis-independent, but they can be expressed in terms of the dual coframe field $(\omega^1, \omega^2, \omega^3)$ to any orthonormal frame field $(\e_1, \e_2, \e_3)$ on $U$.  In order to exploit this freedom, we will choose an orthonormal frame field that is nicely adapted to the geometry of the level surfaces of $f$.

The price of using such an adapted frame field is that the dual 1-forms $(\omega^1, \omega^2, \omega^3)$ are typically not exact.  Rather, there exist unique 1-forms $\{\omega^i_j = -\omega^i_j,\ 1 \leq i,j \leq 3\}$, called the {\em connection forms}, defined by the equations
\[ d\e_i = \e_j \omega^j_i, \qquad i = 1,2,3. \]
(Here and subsequently we use the Einstein summation convention, so the repeated index $j$ is summed from 1 to 3.)
The dual and connection forms satisfy the {\em Cartan structure equations}
\begin{equation}\label{structure-equations}
\begin{aligned}
d\omega^i & = -\omega^i_j \wedge \omega^j, \\
d\omega^i_j & = -\omega^i_k \wedge \omega^k_j.
\end{aligned}
\end{equation}

In order to choose our adapted frame field, first observe that equations \eqref{Beltrami-def} imply that
\[ \nabla \cdot (f\bfu) = \nabla \cdot \bfu = 0 ,\]
which in turn implies that $\nabla f \cdot \bfu = 0$, and hence that $\bfu$ is orthogonal to $\nabla f$.  So, we start by considering orthonormal frame fields $(\e_1, \e_2, \e_3)$ on $U$ with the property that
\[ \e_3 = \frac{\nabla f}{|\nabla f|}. \]
Then the vector fields $(\e_1, \e_2)$ will be tangent to the level surfaces of $f$ at each point of $U$, and the dual forms $(\omega^1, \omega^2, \omega^3)$ must satisfy
\[ \omega^3 = \frac{df}{|\nabla f|}. \]
Let $g:U \to \R$ be defined by $|\nabla f| = e^{-g}$, so that we have $\omega^3 = e^g\, df$.  Then differentiating yields 
\begin{equation} \label{d-omega3-eq}
d\omega^3 = dg \wedge \omega^3. 
\end{equation}
If we write
\[ dg = g_1 \omega^1 + g_2 \omega^2 + g_3\omega^3 \]
(i.e., $g_i$ represents the covariant derivative of $g$ with respect to the vector field $\e_i$),
then equation \eqref{d-omega3-eq} together with the Cartan structure equation
\[ d\omega^3 = -\omega^3_1 \wedge \omega^1 - \omega^3_2 \wedge \omega^2 \]
imply that
\begin{equation}\label{some-connection-forms} 
\begin{aligned}
\omega^3_1 & = h_{11} \omega^1 + h_{12} \omega^2 + g_1 \omega^3 ,\\
\omega^3_2 & = h_{12} \omega^1 + h_{22} \omega^2 + g_2 \omega^3 
\end{aligned}
\end{equation}
for some functions $h_{11}, h_{12}, h_{22}:U \to \R$.
These functions may be interpreted as follows: Because $\omega^3$ is well-defined and integrable on $U$, it defines a (local) foliation of $U$, the leaves of which are the level surfaces of $f$.  At any point $\bfx \in U$, the matrix $[h_{ij}(\bfx)]$ is the second fundamental form of the level surface $\Sigma$ of $f$ passing through the point $\bfx$ with respect to the orthonormal basis $(\e_1, \e_2)$ for the tangent plane $T_\bfx \Sigma$.
By choosing $(\e_1, \e_2)$ to be principal directions for this level surface at each point, we can arrange that this matrix is diagonal, i.e., that $h_{12} = 0$.

In order to ensure that principal vector fields $(\e_1, \e_2)$ can be chosen smoothly, we need to make a {\em constant type} assumption.  Specifically, we will assume that either:
\begin{itemize}
\item the level surfaces of $f$ contain no umbilic points in $U$, or
\item the level surfaces of $f$ in $U$ are all totally umbilic.
\end{itemize}
If neither condition holds identically on $U$, then we will restrict to the open subset of $U$ where the first condition holds.

Now, since any solution $\bfu$ to equations \eqref{Beltrami-def} has the property that $\bfu \cdot \nabla f = 0$, we can write $\bfu$ as
\[ \bfu = u^1 \e_1 + u^2 \e_2 \]
for some smooth functions $u^1, u^2:U \to \R$.  Then we have
\[ \alpha_{\bfu} = u^1\, \omega^1 + u^2\, \omega^2, \qquad \beta_{\bfu} = (-u^2\, \omega^1 + u^1\, \omega^2) \wedge \omega^3, \]
and equations \eqref{Beltrami-eqs-form-version} may be written as
\begin{equation}\label{Beltrami-form-eqs-expanded}
\begin{gathered}
d(u^1\, \omega^1 + u^2\, \omega^2) = f(-u^2\, \omega^1 + u^1\, \omega^2) \wedge \omega^3, \\ 
d((-u^2\, \omega^1 + u^1\, \omega^2) \wedge \omega^3) = 0.
\end{gathered}
\end{equation}

In order to interpret equations \eqref{Beltrami-form-eqs-expanded} as a PDE system for the functions $u^1, u^2$, define the covariant derivatives $u^i_j$ by the equations
\begin{equation}\label{introduce-u-derivs}
\begin{aligned}
du^1 & = u^1_1 \omega^1 + u^1_2 \omega^2 + u^1_3 \omega^3 , \\
du^2 & = u^2_1 \omega^1 + u^2_2 \omega^2 + u^2_3 \omega^3.
\end{aligned}
\end{equation}
Write the connection forms as
\begin{equation}\label{expanded-connection-forms}
\begin{aligned}
\omega^3_1 & = h_{11} \omega^1 + g_1 \omega^3, \\
\omega^3_2 & = h_{22} \omega^2 + g_2 \omega^3, \\
\omega^1_2 & = k_1 \omega^1 + k_2 \omega^2 + k_3 \omega^3.
\end{aligned}
\end{equation}
Applying the Cartan structure equations \eqref{structure-equations} and substituting \eqref{introduce-u-derivs} and \eqref{expanded-connection-forms} into equations \eqref{Beltrami-form-eqs-expanded} yields
\begin{equation}\label{compute-PDE-sys}
\begin{aligned}
& (u^2_1 - u^1_2 - k_1 u^1 - k_2 u^2)\, \omega^1 \wedge \omega^2 \\
& + (u^1_3  - h_{11} u^1 + (k_3 - f) u^2)\, \omega^3 \wedge \omega^1 \\
& + (-u^2_3 + (k_3 - f) u^1 + h_{22} u^2)\, \omega^2 \wedge \omega^3 = 0, \\[0.05in]
& (u^1_1 + u^2_2 + (g_1 - k_2) u^1 + (g_2 + k_1) u^2)\, \omega^1 \wedge \omega^2 \wedge \omega^3 = 0.
\end{aligned}
\end{equation}
It follows that the functions $u^1, u^2:U \to \R$ must satisfy the overdetermined PDE system
\begin{equation}\label{first-order-PDE-sys}
\begin{aligned}
u^2_1 - u^1_2 & = k_1 u^1 + k_2 u^2, \\
u^1_1 + u^2_2 & = (k_2 - g_1) u^1 - (k_1 + g_2) u^2, \\
u^1_3 & = h_{11} u^1 + (f - k_3) u^2, \\
u^2_3 & = (k_3 - f) u^1 + h_{22} u^2.
\end{aligned}
\end{equation}
Qualitatively, the first two equations say that the restrictions of the functions $u^1, u^2$ to each level surface of $f$ satisfy an elliptic PDE system, while the last two describe the evolution of the functions $u^1, u^2$ through the family of level surfaces of $f$.  
This system is compatible if and only if the evolution described by the last two equations preserves the conditions prescribed by the first two.

\section{Examples}\label{example-sec}

In this section, we consider some examples of proportionality factors $f$ for which the solution space of the system \eqref{first-order-PDE-sys} can be analyzed directly.

\begin{example}\label{f=z-ex}
Suppose that $f:U \to \R$ has the form $f(x,y,z) = \phi(z)$. (Here we assume that $\phi$ is known.)  Then we have
\[ \e_3 = \frac{\nabla f}{|\nabla f|} = \frac{\partial}{\partial z}. \]
The level surfaces of $f$ are horizontal planes, and we can take $(\e_1, \e_2, \e_3)$ to be the standard basis for $\R^3$.  Then the dual forms are given by 
\[ \omega^1 = dx, \qquad \omega^2 = dy, \qquad \omega^3 = dz. \]
It follows from the structure equations that the connection forms are given by
\[ \omega^3_1 = \omega^3_2 = \omega^1_2 = 0. \]
In this case, the PDE system \eqref{first-order-PDE-sys} becomes:
\begin{equation}\label{first-order-PDE-sys-f=z}
\begin{aligned}
u^2_x - u^1_y & = 0, \\
u^1_x + u^2_y & = 0, \\
u^1_z & = \phi(z) u^2, \\
u^2_z & = -\phi(z) u^1 .
\end{aligned}
\end{equation}
This PDE system is compatible, and the solution space can be described explicitly: The last two equations in \eqref{first-order-PDE-sys-f=z} may be regarded as an ODE system with respect to the $z$ variable.  The general solution to this subsystem is given by
\begin{equation}\label{f=z-partial-soln}
\begin{aligned}
u^1(x,y,z) & = v(x,y) \cos \Phi(z) + w(x,y) \sin \Phi(z), \\
u^2(x,y,z) & = -v(x,y) \sin \Phi(z) + w(x,y) \cos \Phi(z),
\end{aligned}
\end{equation}
where $\Phi(z)$ satisfies $\Phi'(z) = \phi(z)$.
Substituting \eqref{f=z-partial-soln} into the first two equations in \eqref{first-order-PDE-sys-f=z} yields
\begin{align*}
(w_x - v_y) \cos \Phi(z) - (v_x + w_y) \sin \Phi(z) & = 0, \\
(v_x + w_y) \cos \Phi(z) + (w_x - v_y) \sin \Phi(z) & = 0.
\end{align*}
Therefore, \eqref{f=z-partial-soln} gives a solution to the system \eqref{first-order-PDE-sys-f=z} precisely when the functions $v(x,y), w(x,y)$ satisfy the PDE system
\[ w_x - v_y = v_x + w_y = 0. \]
These are the Cauchy-Riemann equations for the pair $(w(x,y), v(x,y))$, and solutions depend locally on 2 functions of 1 variable.  
It follows that the space of Beltrami fields with proportionality factor $f = \phi(z)$ depend locally on 2 functions of 1 variable. 

\end{example}

\begin{example}\label{cylinders-ex}
Suppose that $f:U \to \R$ has the form $f(x, y, z) = \phi(\sqrt{x^2 + y^2})$, or, in cylindrical coordinates, $f(r, \theta, z) = \phi(r)$.  (Again, we assume that $\phi$ is known.)  Then we have
\[ \e_3 = \frac{1}{\sqrt{x^2 + y^2}} \left( x \frac{\partial}{\partial x} + y \frac{\partial}{\partial y} \right) = \frac{\partial}{\partial r}. \]
The level surfaces of $f$ are concentric circular cylinders, and a principal orthonormal frame field is given by
\[
\e_1 = \frac{1}{\sqrt{x^2 + y^2}} \left( -y \frac{\partial}{\partial x} + x \frac{\partial}{\partial y} \right) = \frac{1}{r} \frac{\partial}{\partial \theta}, \qquad
\e_2 = \frac{\partial}{\partial z}.
\]
Then the dual forms are given by
\[ \omega^1 = r\, d\theta, \qquad \omega^2 = dz, \qquad  \omega^3 = dr.\]
It follows from the structure equations that the connection forms are given by
\[ \omega^1_2 = \omega^3_2 = 0, \qquad \omega^3_1 = -d\theta. \]
In this case, the PDE system \eqref{first-order-PDE-sys} becomes:
\begin{equation}\label{first-order-PDE-sys-f=r}
\begin{aligned}
u^2_{\theta} - r u^1_z & = 0, \\
u^1_{\theta} + r u^2_z & = 0, \\
u^1_r & = - \frac{1}{r} u^1 + \phi(r) u^2, \\
u^2_r & = -\phi(r) u^1 .
\end{aligned}
\end{equation}
In order to explore the compatibility of the system \eqref{first-order-PDE-sys-f=r}, differentiate the 2nd equation with respect to $r$, the 3rd equation with respect to $\theta$, and the 4th equation with respect to $z$.  This process yields
\begin{align*}
0 & = u^1_{r\theta} + u^2_z + r u^2_{rz} \\
& = \left( -\frac{1}{r} u^1_{\theta} + \phi(r) u^2_{\theta}\right) + u^2_z - r \phi(r) u^1_z \\
& = -\frac{1}{r} u^1_{\theta} + u^2_z.
\end{align*} 
Together with the 2nd equation, this implies that $u^1_{\theta} = u^2_z = 0$.
Therefore, we have
\[ u^1 = u^1(r, z), \qquad u^2 = u^2(r, \theta). \]
Now consider the 3rd equation in \eqref{first-order-PDE-sys-f=r}: 
\[ u^1_r = - \frac{1}{r} u^1 + \phi(r) u^2. \]
Differentiating with respect to $\theta$ yields $\phi(r) u^2_{\theta} =0$, and hence $u^2_{\theta} = 0$.  
Similarly, differentiating the 4th equation in \eqref{first-order-PDE-sys} with respect to $z$ shows that $u^1_z = 0$.
It follows that $u^1$ and $u^2$ are functions of $r$ alone---and hence constant on each level surface of $f$---and that they satisfy the ODE system given by the 3rd and 4th equations in \eqref{first-order-PDE-sys-f=r}.
Therefore, the solution space of the system \eqref{first-order-PDE-sys-f=r} is a 2-dimensional space, parametrized by arbitrary initial values $(u^1(r_0), u^2(r_0))$ for $(u^1(r), u^2(r))$ on some cylinder $r = r_0$.

\end{example}

\begin{example}
Suppose that $f:U \to \R$ has the form $f(x, y, z) = \phi(\tan^{-1}(y/x))$, or, in cylindrical coordinates, $f(r, \theta, z) = \phi(\theta)$.  The level surfaces of $f$ are open subsets of planes containing the $z$-axis (although our hypotheses exclude points of the $z$-axis from $U$), and
we can use the same orthonormal frame field as in Example \ref{cylinders-ex}, but in a different order:
\[ \e_1 = \frac{\partial}{\partial z}, \qquad \e_2 = \frac{\partial}{\partial r}, \qquad \e_3 = \frac{1}{r} \frac{\partial}{\partial \theta}. \]
Then the dual forms are given by
\[ \omega^1 = dz, \qquad \omega^2 = dr, \qquad  \omega^3 = r\, d\theta.\]
It follows from the structure equations that the connection forms are given by
\[ \omega^3_1 = \omega^1_2 = 0, \qquad \omega^3_2 = d\theta. \]
In this case, the PDE system \eqref{first-order-PDE-sys} becomes:
\begin{equation}\label{first-order-PDE-sys-f=theta}
\begin{aligned}
u^2_z - u^1_r & = 0, \\
u^1_z + u^2_r & = -\frac{1}{r} u^2, \\
u^1_{\theta} & = r \phi(\theta) u^2, \\
u^2_{\theta} & = -r \phi(\theta) u^1 .
\end{aligned}
\end{equation}
In order to explore the compatibility of the system \eqref{first-order-PDE-sys-f=theta}, differentiate the 2nd equation with respect to $\theta$, the 3rd equation with respect to $z$, and the 4th equation with respect to $r$.  This process yields
\begin{align*}
0 & = u^1_{z\theta} + u^2_{r\theta} + \frac{1}{r} u^2_{\theta} \\
& =  r \phi(\theta) u^2_z + (-\phi(\theta) u^1 - r\phi(\theta) u^1_r) + \frac{1}{r}(-r \phi(\theta) u^1) \\
& = \phi(\theta) \left( r( u^2_z - u^1_r) - 2r u^1 \right).
\end{align*} 
Taking the 1st equation in \eqref{first-order-PDE-sys-f=theta} into account, this becomes
\[ 0 = -2r \phi(\theta) u^1; \]
therefore, we must have $u^1 = 0$.  But then the 3rd equation in \eqref{first-order-PDE-sys-f=theta} implies that $u^2=0$ as well.  Hence there are no nonzero Beltrami fields with proportionality factor $f = \phi(\theta)$.

\end{example}

\section{Exterior differential system analysis}\label{analysis-sec}
The PDE system \eqref{first-order-PDE-sys} may be reformulated as an exterior differential system as follows.  Observe that, algebraically, we may solve the first two equations in \eqref{first-order-PDE-sys} by setting
\begin{equation}\label{solve-for-some-derivatives}
\begin{aligned}
u^1_1 & = p_1 + (k_2 - g_1)u^1, \\
u^1_2 & = p_2 - k_2 u^2, \\
u^2_1 & = p_2 + k_1 u^1, \\
u^2_2 & = -p_1 - (k_1 + g_2) u^2
\end{aligned} 
\end{equation}
for some arbitrary functions $p_1, p_2:U \to \R$.  So, let $M = U \times \R^4$, with coordinates $(u^1, u^2, p_1, p_2)$ on the $\R^4$ factor, and
let $\calI$ be the differential ideal on $M$ generated by the 1-forms
\begin{equation}
\begin{aligned}
\theta^1 & = du^1 - (p_1 + (k_2 - g_1)u^1)\,\omega^1 - (p_2 - k_2 u^2)\,\omega^2 - (h_{11} u^1 + (f - k_3) u^2)\, \omega^3, \\
\theta^2 & = du^2 - (p_2 + k_1 u^1)\,\omega^1 - (-p_1 - (k_1 + g_2) u^2)\,\omega^2 - ((k_3 - f) u^1 + h_{22} u^2)\,\omega^3,
\end{aligned} \label{define-I}
\end{equation} 
and their exterior derivatives.  An {\em integral manifold} of $(M, \calI)$ is a submanifold $\iota:N \hookrightarrow M$ with the property that $\iota^*(\calI) = (0)$.
Three-dimensional integral manifolds of $(M, \calI)$ satisfying the independence condition $\iota^*(\omega^1 \wedge \omega^2 \wedge \omega^3) \neq 0$ are in one-to-one correspondence with solutions of the PDE system \eqref{first-order-PDE-sys} on $U$, and hence with Beltrami fields on $U$ with proportionality factor $f$.

Cartan's algorithm for computing the space of integral manifolds of the exterior differential system $(M, \calI)$ with independence condition $\Omega = \omega^1 \wedge \omega^2 \wedge \omega^3$ begins by computing the 2-forms $d\theta^1, d\theta^2$ modulo the {\em algebraic} ideal generated by $\{\theta^1, \theta^2\}$.  This yields expressions of the form 
\begin{equation}\label{dthetas-first-pass}
\begin{aligned}
d\theta^1 & \equiv  -dp_1 \wedge \omega^1 - dp_2 \wedge \omega^2 + T^1_{ij} \omega^i \wedge \omega^j , \\
d\theta^2 & \equiv -dp_2 \wedge \omega^1 + dp_1 \wedge \omega^2 + T^2_{ij} \omega^i \wedge \omega^j,
\end{aligned}
\end{equation}
where the $T^k_{ij}$ are functions on $M$ involving the known functions $f, g, h_{ij}, k_i$ on $U$ and their derivatives, as well as the unknowns $u^1, u^2, p_1, p_2$ on $M$.  The functions $T^k_{ij}$ are called {\em torsion} functions for the exterior differential system $(M, \calI)$.

The next step is to determine whether there exist functions $p_{ij}$ on $M$ such that the 1-forms $\pi_1, \pi_2$ defined by
\[ \pi_i = dp_i - p_{ij} \omega^j \]
satisfy
\begin{equation}\label{dthetas-first-pass-absorbed}
\begin{aligned}
d\theta^1 & \equiv  -\pi_1 \wedge \omega^1 - \pi_2 \wedge \omega^2 , \\
d\theta^2 & \equiv -\pi_2 \wedge \omega^1 + \pi_1 \wedge \omega^2.
\end{aligned}
\end{equation}
(Note that these are affine linear equations for the functions $p_{ij}$, so the existence of such functions can be determined via linear algebra.)
If no such functions $p_{ij}$ exist, then the system $(M, \calI, \Omega)$ has no 3-dimensional integral manifolds, and hence there are no Beltrami fields with with proportionality factor $f$.  (In more common PDE terminology, the nonexistence of such functions means that imposing the condition that mixed partial derivatives commute produces a contradiction.)
If, on the other hand, such functions $p_{ij}$ do exist, we say that {\em the torsion can be absorbed}.

A straightforward\footnote{While straightforward in principle, most of the computations in this algorithm are impractical to carry out by hand.  We have used {\sc Maple} for all computations, along with the {\sc Cartan} package developed by the first author and available for free download at http://euclid.colorado.edu/$\tilde{\ }$jnc/Maple.html.}  computation shows that the torsion can be absorbed if and only if 
\begin{equation}\label{torsion-first-pass}
\begin{aligned}
2(& h_{11} - h_{22}) p_1 \\
& + (g_{13} + (h_{11})_1 -(h_{22})_1 - g_1 h_{22} + g_2(k_3 - 2f)) u^1 \\
& \ \ \  + (g_{23} + (h_{22})_2 - (h_{11})_2 - g_1(k_3 - 2f) - g_2 h_{11}) u^2 \\
& \ \ \ \ \  = 0,
\end{aligned}
\end{equation}
where subscripts indicate covariant derivatives with respect to the vector fields $\e_i$.
This equation should be regarded as a linear equation for the unknown quantities $p_1, u^1, u^2$ on $M$, with coefficients depending on known quantities on $U$.  

At this point, there are three possibilities to consider:
\begin{enumerate}
\item Equation \eqref{torsion-first-pass} holds identically on $M$.  This happens if and only if the coefficients of $p_1, u^1$, and $u^2$ each vanish identically on $U$.
\item The coefficient of $p_1$ vanishes identically on $U$, but at least one of the other two coefficients does not.
\item The coefficient of $p_1$ is nonzero on $U$.
\end{enumerate}
We will consider each of these cases in separate subsections below.

\subsection{Case 1: Equation \eqref{torsion-first-pass} holds identically on $M$}

This condition means that the coefficients of $p_1, u^1$, and $u^2$ in \eqref{torsion-first-pass} must all vanish identically on $U$. 
From the vanishing of the coefficient of $p_1$, we have 
\begin{equation}\label{tot-umbilic-cond}
 h_{11} - h_{22} = 0. 
\end{equation}
This means that all the level surfaces of $f$ are totally umbilic (i.e., open subsets of either planes or spheres), and therefore the function $h = h_{11} = h_{22}$, which represents the normal curvature in each direction along the level surfaces of $f$, is constant on each level surface of $f$.
Taking this into account, the vanishing of the coefficients of $u^1$ and $u^2$ reduces to
\begin{equation}\label{umbilic-vanishing-torsion}
 g_{13} - g_1 h +  g_2(k_3 - 2f) = g_{23}  - g_1(k_3 - 2f) - g_2 h = 0. 
\end{equation}

\begin{lemma}\label{tot-umbilic-vanishing-torsion-lemma}
Equations \eqref{tot-umbilic-cond} and  \eqref{umbilic-vanishing-torsion} imply that $g_1 = g_2 = 0$.
\end{lemma}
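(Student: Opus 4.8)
The plan is to regard \eqref{tot-umbilic-cond} and \eqref{umbilic-vanishing-torsion} as the bottom layer of a prolongation, and to force the conclusion by requiring that the covariant derivatives of $g$ actually come from a function, i.e.\ by applying $d^2 = 0$ enough times. Throughout I write $h = h_{11} = h_{22}$; since the level surfaces are totally umbilic, $h$ is (by the Codazzi content of the second structure equation) constant on each of them, so $dh = h_3\,\omega^3$.

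First I would extract the tangential Hessian of $g$ from the flatness of $\R^3$. Substituting the connection forms \eqref{expanded-connection-forms} into the second structure equation in \eqref{structure-equations}, the identities $d\omega^3_1 = -\omega^3_2 \wedge \omega^2_1$ and $d\omega^3_2 = -\omega^3_1 \wedge \omega^1_2$ yield
\[
g_{11} = h_3 - h^2 - g_3 h - g_1^2, \qquad g_{12} = g_{21} = -g_1 g_2, \qquad g_{22} = h_3 - h^2 - g_3 h - g_2^2 .
\]
(The two routes to $g_{12}$ agree, a first consistency check.) Together with \eqref{umbilic-vanishing-torsion}, which supplies $g_{13}$ and $g_{23}$, this determines the entire covariant Hessian of $g$ apart from $g_{33}$, and hence all frame derivatives $\e_i(g_1)$ and $\e_i(g_2)$ of the tangential gradient of $g$.

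The device driving the argument is the function $\rho^2 := g_1^2 + g_2^2$. Using the derivatives just computed,
\[
\e_1(\rho^2) = 2 g_1\bigl(h_3 - h^2 - \rho^2\bigr), \qquad \e_2(\rho^2) = 2 g_2\bigl(h_3 - h^2 - \rho^2\bigr), \qquad \e_3(\rho^2) = 2\rho^2\,(h + g_3).
\]
The first two identities say that the tangential gradient of $\rho^2$ is proportional to that of $g$, while the third shows that $\rho^2$ evolves multiplicatively along $\e_3$; in particular $\rho^2$ cannot pass from zero to nonzero, so it suffices to derive a contradiction on an open set where $\rho^2 > 0$. It is worth noting the clean reformulation that $\mathrm{Hess}_\Sigma(e^g)$ is a multiple of the induced metric on each leaf $\Sigma$, with factor $(h_3 - h^2)\,e^g$ and $e^g = 1/|\nabla f|$; this shows why the umbilic hypothesis by itself does \emph{not} force $g_1 = g_2 = 0$ and hence why the torsion conditions \eqref{umbilic-vanishing-torsion} are indispensable.

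The crux is the next prolongation. On $\{\rho^2 > 0\}$ I would impose the commutator integrability conditions $\e_i\e_j(g_a) - \e_j\e_i(g_a) = [\e_i,\e_j](g_a)$ for $a = 1,2$, with the brackets $[\e_i,\e_j]$ read off from \eqref{structure-equations}. Evaluating these requires three further inputs: the Gauss equation $d\omega^1_2 = -\omega^1_3\wedge\omega^3_2$ (which gives $\e_1 k_2 - \e_2 k_1 = h^2 + k_1^2 + k_2^2$ and expresses $\e_3 k_1, \e_3 k_2$ through tangential derivatives of $k_3$), the known derivatives of $f$ (namely $\e_1 f = \e_2 f = 0$ and $\e_3 f = e^{-g}$), and the relation $d(dh) = 0$ applied to $dh = h_3\,\omega^3$ (giving $\e_1 h_3 = -h_3 g_1$, $\e_2 h_3 = -h_3 g_2$). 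The purely tangential integrability conditions reduce to identities---another consistency check---so all the content sits in the conditions involving $\e_3$. These still contain the transverse derivatives left free by the geometry, namely $\e_3 h_3$, $\e_3 k_3$, and the tangential derivatives of $k_3$, and the real labor is to eliminate them by forming the appropriate $g_1$-versus-$g_2$ combinations. Doing so collapses the system to polynomial identities in $g_1, g_2$ and the known data; one already finds, for instance, $\rho^2\,(k_3 - 2f)\,(h_3 - h^2 - 2\rho^2) = 0$, and the remaining relations must then be used to exclude the spurious branches $k_3 = 2f$ and $h_3 - h^2 = 2\rho^2$, leaving only $\rho^2 = 0$, i.e.\ $g_1 = g_2 = 0$. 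I expect this elimination and branch-exclusion step to be the main obstacle: it is exactly the sort of real polynomial-system analysis the paper flags as its computational bottleneck, and the branching demands care to ensure that no genuine solution with $\rho^2 > 0$ survives.
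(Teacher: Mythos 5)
Your setup is sound as far as it goes: the tangential Hessian formulas for $g$ and the derivative formulas for $\rho^2 = g_1^2 + g_2^2$ are consistent with what the structure equations give (modulo the fact that you are using the true covariant Hessian while the paper's $g_{ij}$ are the frame derivatives defined by $dg_i = g_{ij}\,\omega^j$; this shifts terms such as $-g_3 h$ around, so you must fix one convention before interpreting \eqref{umbilic-vanishing-torsion}). But there is a genuine gap at exactly the point you flag: the elimination of $\e_3 h_3$, $\e_3 k_3$, and the tangential derivatives of $k_3$, and the subsequent branch analysis, is never carried out, and that is precisely where the contradiction has to be produced. As written, the argument sets up a framework and exhibits one sample identity, but it does not prove $\rho^2 = 0$.

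More seriously, the branch you call spurious, $k_3 = 2f$, cannot be excluded --- it is forced. Because the leaves are totally umbilic, the tangent frame $(\e_1, \e_2)$ may be rotated arbitrarily while remaining principal, so on the set where $\rho^2 > 0$ one can normalize $g_2 = 0$, $g_1 < 0$; then the second equation of \eqref{umbilic-vanishing-torsion} reads $-g_1(k_3 - 2f) = 0$ and immediately gives $k_3 = 2f$ (and the first gives $g_{13} = g_1 h$). Your identity $\rho^2\,(k_3 - 2f)\,(h_3 - h^2 - 2\rho^2) = 0$ is therefore vacuous in that frame, and the contradiction must be extracted from \emph{inside} the branch $k_3 = 2f$. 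The paper does this with three short computations: $d\omega^3_2 = -\omega^3_1 \wedge \omega^1_2$ gives $k_1 = 0$ and $h_3 = h^2 - g_1 k_2$; $d\omega^3_1 = \omega^3_2 \wedge \omega^1_2$ gives $g_{11} = -(g_1^2 + g_1 k_2)$ and $g_{12} = 0$; and then $d\omega^1_2 = \omega^3_1 \wedge \omega^3_2$ reduced mod $\omega^2$, together with $d(dg_1) = 0$ reduced mod $\omega^1$, yield $2f(g_1 - k_2) = 0$ and $-2f g_1(g_1 + k_2) = 0$, whence $g_1 = k_2 = 0$, the desired contradiction. If you wish to retain the rotation-invariant $\rho^2$ formulation you must still carry out the analogue of this final step; exploiting the rotational freedom to kill $g_2$ at the outset is what makes the computation tractable, and without it your polynomial system is substantially larger than it needs to be.
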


\begin{proof}
Suppose not.  Since the level surfaces of $f$ are totally umbilic, we can rotate the tangent frame vectors $(\e_1, \e_2)$ at each point however we like without changing the condition that they form a principal adapted frame field for the level surfaces of $f$.  Under such a rotation, the vector $[ g_1 \ g_2 ]$ is rotated through the same angle as the tangent frame vectors at each point.  Thus we can choose an adapted frame field for which $g_2 = 0$ and $g_1 < 0$.
Under this assumption, the connection forms \eqref{expanded-connection-forms} may be written as
\begin{equation}\label{umbilic-expanded-connection-forms}
\begin{aligned}
\omega^3_1 & = h \omega^1 + g_1 \omega^3, \\
\omega^3_2 & = h \omega^2 , \\
\omega^1_2 & = k_1 \omega^1 + k_2 \omega^2 + k_3 \omega^3,
\end{aligned}
\end{equation}
and equations \eqref{umbilic-vanishing-torsion} become
\begin{equation*}
 g_{13} - g_1 h = -g_1(k_3 - 2f)  = 0. 
\end{equation*}
Since $g_1 \neq 0$, it follows that 
\begin{equation}\label{umbilic-vanishing-torsion-1}
 g_{13} = g_1 h, \qquad k_3 = 2f. 
\end{equation}

\begin{remark}\label{g1=curvature-remark}
Some of the functions appearing in the connection forms \eqref{umbilic-expanded-connection-forms} may be interpreted geometrically.  (For instance, we have already seen that $h$ represents the normal curvature in each direction along the level surfaces of $f$.)
 Let $\alpha:I \to U$ be any integral curve of the vector field $\e_3$. Then $\alpha$ is a unit speed curve with $\alpha'(s) = \e_3(\alpha(s))$.  Its acceleration vector at the point $\alpha(s)$ is given by
\[ \alpha''(s) = \frac{d}{ds} \e_3(\alpha(s)) = d\e_3(\alpha'(s)) = d\e_3(\e_3) = -\e_1 \omega^3_1(\e_3) = -g_1 \e_1. \]
Therefore, $\e_1$ is the Frenet normal vector to $\alpha$, and the curvature of $\alpha$ is $\kappa = -g_1$.  Similar reasoning applied to the Frenet binormal vector $\e_2$ shows that $\alpha$ has torsion $\tau = -k_3$.
\end{remark}

Now consider the structure equations \eqref{structure-equations}.  Substituting \eqref{umbilic-expanded-connection-forms} into the structure equation 
\[ d\omega^3_2 = -\omega^3_1 \wedge \omega^1_2 \]
yields
\[ -(g_1 k_1 \omega^1 + (h_3 + g_1 k_2 - h^2) \omega^2) \wedge \omega^3 = 0. \]
Since $g_1 \neq 0$, it follows that 
\[ k_1 = 0, \qquad h_3 = h^2 - g_1 k_2. \]
Next, substituting \eqref{umbilic-expanded-connection-forms} into the structure equation
\[ d\omega^3_1 = -\omega^3_2 \wedge \omega^2_1 = \omega^3_2 \wedge \omega^1_2 \]
and taking \eqref{umbilic-vanishing-torsion-1} into account yields
\[ ((g_{11} + g_1^2 + g_1 k^2) \omega^1 + g_{12} \omega^2) \wedge \omega^3 = 0, \]
and hence
\[ g_{11} = -(g_1^2 + g_1 k_2), \qquad g_{12} = 0. \]
Now, substituting \eqref{umbilic-expanded-connection-forms} into the structure equation
\[ d\omega^1_2 = -\omega^1_3 \wedge \omega^3_2 = \omega^3_1 \wedge \omega^3_2 \]
and reducing modulo $\omega^2$ yields
\[ 2f (g_1 - k_2) \omega^1 \wedge \omega^3 \equiv 0 \mod{\omega^2}, \]
while computing $d(dg_1) \equiv 0$ modulo $\omega^1$ yields
\[ -2f g_1 (g_1 + k_2) \omega^2 \wedge \omega^3 \equiv 0 \mod{\omega^1}. \]
But since we have assumed that $f$ and $g_1$ are both nonzero, this implies that $g_1 = k_2 = 0$, which is a contradiction.  Therefore, we must have $g_1 = g_2 = 0$, as claimed.

\end{proof}

Now, let $\alpha:I \to U$ be any integral curve of the vector field $\e_3$, as in Remark \ref{g1=curvature-remark}.  The condition $g_1 = g_2 = 0$ implies that $\omega^3_1(\e_3) = \omega^3_2(\e_3) = 0$, which in turn implies that
\[ \alpha''(s) = d\e_3(\e_3) = -\e_1 \omega^3_1(\e_3) - \e_2 \omega^3_2(\e_3) = 0. \]
Therefore, $\alpha$ is a straight line in $\R^3$.

To summarize, we have now shown that if equation \eqref{torsion-first-pass} holds identically on $M$, then:
\begin{enumerate}
\item Every level surface of $f$ is totally umbilic. 
\item All integral curves of the vector field consisting of the unit normal vectors to the level surfaces of $f$ are straight lines.
\end{enumerate}
Conversely, these conditions on the level surfaces of $f$ suffice to guarantee that \eqref{torsion-first-pass} holds identically on $M$.
Together, these conditions are equivalent to the condition that the level surfaces of $f$ are contained either in parallel planes (if $h=0$) or concentric spheres (if $h \neq 0$).  Note that in the latter case, our assumptions on the nonvanishing of $f$ and $\nabla f$ imply that the common center of these spheres is not contained in $U$.

Since \eqref{torsion-first-pass} holds identically on $M$, the torsion can be absorbed; thus there exist 1-forms $\pi_1, \pi_2$ on $M$ such that equations \eqref{dthetas-first-pass-absorbed} hold.  We can write these equations in the form
\begin{equation}\label{totally-umbilic-tableau}
\begin{bmatrix} d\theta^1 \\[0.05in] d\theta^2 \end{bmatrix} \equiv 
- \begin{bmatrix} \pi_1 & \pi_2 \\[0.05in] \pi_2 & -\pi_1 \end{bmatrix} 
\wedge \begin{bmatrix} \omega^1 \\[0.05in] \omega^2 \end{bmatrix} \mod{ \{\theta^1, \theta^2\} }. 
\end{equation}
In Cartan's algorithm, a straightforward computation shows that the matrix on the right-hand side of equation \eqref{totally-umbilic-tableau} is an {\em involutive tableau} with Cartan characters $s_1 = 2$, $s_2=0$.  It follows from the Cartan-K\"ahler theorem that the space of integral manifolds of the system $(M, \calI)$ with independence condition $\Omega \neq 0$ is locally parametrized by 2 functions of 1 variable.  (Note that the case $h=0$ is precisely the scenario of Example \ref{f=z-ex}.)

\begin{remark}
This case is the only time that we will need the full strength of the Cartan-K\"ahler theorem, and hence the only time when we truly need to assume that $f$ is real analytic.  In all other cases, existence results will follow from the Frobenius theorem, while nonexistence results require only sufficient smoothness to compute enough derivatives to derive the necessary contradictions.  
And in fact, as we saw in Example \ref{f=z-ex}, even in this case real analyticity of $f$ may not be strictly necessary: The Cartan-K\"ahler theorem is really only needed to prove existence of local restricted solutions on each level surface of $f$, which are open subsets of planes or spheres.  The fact that these restricted solutions may be combined to produce a consistent solution on an open subset of $U$ may then be proved by ODE techniques, which only require finite 
differentiability of $f$.
\end{remark}

\subsection{Case 2: The coefficient of $p_1$ in \eqref{torsion-first-pass} vanishes identically on $U$, but at least one of the other two coefficients does not}

As in the previous case, the vanishing of the coefficient of $p_1$ implies that $h_{11} = h_{22} = h$, and hence all the level surfaces of $f$ are totally umbilic.  The nonvanishing of at least one of the other coefficients implies that at least one of $g_1, g_2$ is nonzero; thus, as in the proof of Lemma \ref{tot-umbilic-vanishing-torsion-lemma}, we may choose an adapted frame field $(\e_1, \e_2, \e_3)$ on $U$ for which $g_2=0$ and $g_1 < 0$.  Then the connection forms \eqref{expanded-connection-forms} may be written as in \eqref{umbilic-expanded-connection-forms}.  We will consider separately the cases where $h=0$ and $h \neq 0$.

\subsubsection{Case 2.1: $h=0$.}
In this case, the torsion condition \eqref{torsion-first-pass} reduces to
\begin{equation}\label{torsion-umbilic=h=0}
g_{13}\, u^1 - g_1(k_3 - 2f) u^2  = 0.
\end{equation}

Let $\alpha:I \to U$ be any integral curve of the vector field $\e_3$.  For ease of notation, let $f(s)$ and $\e_i(s)$ denote $f(\alpha(s))$ and $\e_i(\alpha(s))$, respectively.
As mentioned in Remark \ref{g1=curvature-remark}, we have $\alpha'(s) = \e_3(s)$, and the $\omega^3$ terms in the connection forms \eqref{umbilic-expanded-connection-forms} imply that 
\begin{equation}\label{case-2-1-Frenet-eqs}
 \e_3'(s) = -g_1\e_1(s), \qquad \e_1'(s) = g_1 \e_3(s) - k_3 \e_2(s), \qquad \e_2'(2) = k_3 \e_1(s). 
\end{equation}
Therefore, the Frenet frame of $\alpha$ is given by $(\e_3, \e_1, \e_2)$, and the curvature and torsion of $\alpha$ are given by
\[ \kappa(s) = -g_1(\alpha(s)), \qquad \tau(s) = -k_3(\alpha(s)). \]
Since $g_1 < 0$, we have $\kappa(s) > 0$; in particular, $\alpha$ is not a straight line.

The level plane of $f$ passing through the point $\alpha(s)$ is spanned by the vectors $\e_1(s), \e_2(s)$; note that, by definition, $f = f(s)$ at every point of this plane.  Moreover, the adapted frame field $(\e_1, \e_2, \e_3)$ on $U$ is identical at each point of this plane.
This suggests a geometrically natural local coordinate system $(s, v, w)$ on $U$ defined by
\[ \bfx(s,v,w) = \alpha(s) + v \e_1(s) + w \e_2(s). \]
Then we have
\[ d\bfx = \e_1(s) (dv - w \tau(s)\, ds) + 
\e_2(s) ( dw + v \tau(s)\, d(s))
+ \e_3(s) (1 - v \kappa(s))\, ds,
 \]
and so the dual forms are given by
\begin{equation}\label{case-2-1-dual-forms}
\omega^1 = dv - w \tau(s)\, ds, \qquad \omega^2 = dw + v \tau(s)\, d(s), \qquad \omega^3 = (1 - v \kappa(s))\, ds.
\end{equation}
The Cartan structure equations \eqref{structure-equations} may then be used to show that the connection forms are given by
\[ \omega^3_1 = -\frac{\kappa(s)}{1 - v\kappa(s)}\, \omega^3, \qquad \omega^3_2 = 0, \qquad \omega^1_2 = -\frac{\tau(s)}{1 - v\kappa(s)} \,\omega^3. \]
Thus we have
\[ g_1 = -\frac{\kappa(s)}{1 - v\kappa(s)}, \qquad k_3 = -\frac{\tau(s)}{1 - v\kappa(s)}, \] 
and, since $g_{13}$ is defined by the covariant equation $dg_1 = g_{ij} \omega^j$, it is straightforward to compute that
\[ g_{13} = -\frac{\kappa'(s) + w \kappa(s)^2 \tau(s)}{(1 - v\kappa(s))^3}. \]

The torsion absorption condition \eqref{torsion-umbilic=h=0} may now be written (after clearing denominators) as
\begin{equation}\label{torsion-umbilic=h=0-coords}
A u^1 +B u^2 = 0,
\end{equation}
where
\begin{equation}\label{case-2-1-AB}
 A = \kappa'(s) + w \kappa(s)^2 \tau(s), \qquad B = \kappa(s)(1 - v\kappa(s))(\tau(s) + 2f(s)(1 - v\kappa(s))). 
\end{equation}
By hypothesis, at least one of the coefficients of $u^1, u^2$ in \eqref{torsion-umbilic=h=0-coords} is nonvanishing, so this equation defines a relation between the unknown functions $u^1, u^2$ that must hold on any integral manifold.  

Next, we compute the covariant derivative of equation \eqref{torsion-umbilic=h=0-coords} in the direction of $\e_3$.  From equations \eqref{first-order-PDE-sys}, in this case we have
\[ u^1_3 = (f(s) + \tau(s)) u^2, \qquad u^2_3 = -(f(s) + \tau(s)) u^1, \]
and so we obtain
\begin{equation}\label{torsion-deriv-umbilic=h=0-coords}
(A_3 -(f(s) + \tau(s)) B)u^1 + (B_3 + (f(s) + \tau(s))A)u^2 = 0.
\end{equation}
In order for equations \eqref{torsion-umbilic=h=0-coords} and \eqref{torsion-deriv-umbilic=h=0-coords} to admit a nonzero solution $(u^1, u^2)$, we must have
\begin{equation}\label{case-2-1-this-must-vanish}
A(B_3 + (f(s) + \tau(s))A) - B(A_3 -(f(s) + \tau(s)) B) = 0.
\end{equation}
From equations \eqref{case-2-1-dual-forms}, we can compute that for any function $F(s,v,w)$, the covariant derivative $F_3$ is given by
\[ F_3 = \frac{1}{1 - v \kappa(s)}\left(\frac{\partial F}{\partial s} + \tau(s) \left(w \frac{\partial F}{\partial v} - v \frac{\partial F}{\partial w} \right) \right). \]
Applying these formulas to compute $A_3$, $B_3$ and substituting these expressions (together with \eqref{case-2-1-AB}) into equation \eqref{case-2-1-this-must-vanish} yields a polynomial in the variables $(v,w)$ whose coefficients are functions of $s$ that must all vanish identically.  In particular, the highest-order term of this polynomial is
\[ 4 f(s)^3 \kappa(s)^7 v^5. \]
Since $f(s) \neq 0$, we must have $\kappa(s) = 0$.  But this contradicts the assumption that $\alpha$ is not a straight line; therefore, there are no nonzero Beltrami fields in this case.

\subsubsection{Case 2.2: $h \neq 0$.}
In this case, the torsion condition \eqref{torsion-first-pass} reduces to
\begin{equation}\label{torsion-umbilic-h-nonzero}
(g_{13} - g_1 h) u^1 - g_1(k_3 - 2f) u^2  = 0.
\end{equation}

Since the level spheres of $f$ are not concentric in this case, their centers form a regular curve in $\R^3$.  (Note that this curve is generally not contained in $U$.)  So, let $\alpha:I \to \R^3$ be the curve consisting of the centers of the level spheres of $f$, parametrized by arc length.  Let $(T(s), N(s), B(s))$ denote the Frenet frame of $\alpha$ and $\kappa(s), \tau(s)$ the curvature and torsion of $\alpha$.  (We allow for the possibility that $\alpha$ is a straight line, in which case we take $\kappa(s) = \tau(s) = 0$ and let $(T(s), N(s), B(s))$ be any smooth orthonormal frame field along $\alpha$ with $\alpha'(s) = T(s)$.)

Let $\rho(s)$ denote the radius of the level sphere of $f$ with center $\alpha(s)$, and let $f(s)$ denote the value of $f$ on this sphere.  We can define a geometrically natural local coordinate system $(s,v,w)$ on $U$ by
\[ \bfx(s,v,w) = \alpha(s) + \rho(s) \left[ (\cos v) T(s) + (\sin v)((\cos w) N(s) + (\sin w) B(s)) \right]. \]
An adapted frame field on $U$ may then be defined by
\[ \e_1(s,v,w) = \frac{\bfx_v}{|\bfx_v|}, \qquad \e_2(s,v,w) = \frac{\bfx_w}{|\bfx_w|}, \qquad \e_3(s,v,w) = \e_1(s,v,w) \times \e_2(s,v,w). \]

The argument proceeds in a similar fashion to that in the previous case, although the computations are messier.  Expressing the differential $d\bfx$ in terms of the basis $(\e_1, \e_2, \e_3)$ shows that the dual forms are given by
\begin{equation}\label{case-2-2-dual-forms}
\begin{aligned}
\omega^1 & = \rho(s)\,dv +(\kappa(s) \rho(s) \cos w - \sin v) \, ds, \\
\omega^2 & = \rho(s) (\sin v)\, dw + \rho(s) (\tau(s) \sin v - \kappa(s) \cos v \sin w)  \, d(s), \\
\omega^3 & = (\rho'(s) + \cos v)\, ds.
\end{aligned}
\end{equation}
The Cartan structure equations \eqref{structure-equations} may then be used to show that the connection forms are given by
\begin{align*} 
\omega^3_1 & = -\frac{1}{\rho(s)} \left( \omega^1 + \frac{\sin v}{(\rho'(s) + \cos v)} \omega^3 \right), \\
\omega^3_2 & = -\frac{1}{\rho(s)} \omega^2,  \\
\omega^1_2 & = -\frac{\cos v}{\rho(s) \sin v} \omega^2 - \frac{\kappa(s) \sin w}{\sin v\, (\rho'(s) + \cos v)}\omega^3. 
\end{align*}
Thus we have 
\[ g_1 = -\frac{\sin v}{\rho(s) (\rho'(s) + \cos v)}, \qquad k_3 = -\frac{\kappa(s) \sin w}{\sin v\, (\rho'(s) + \cos v)}, \qquad h = \frac{1}{\rho(s)}, \]
and computing the covariant derivative of $g_1$ in the $\e_3$ direction  shows that 
\[ g_{13} = \frac{(\rho(s) \rho''(s) + \rho'(s)^2 - 1) \sin v + \rho(s) \kappa(s) (\rho'(s) \cos v + 1)\cos w }{\rho(s)^2(\rho'(s) + \cos v)^3}. \]

The torsion absorption condition \eqref{torsion-umbilic-h-nonzero} may now be written (after clearing denominators) as
\begin{equation}\label{torsion-umbilic-h-nonzero-coords}
A u^1 +B u^2 = 0,
\end{equation}
where
\begin{equation}\label{case-2-2-AB}
\begin{aligned}
 A & = (\rho(s) \rho''(s) - 2\rho'(s)\cos v - \cos^2 v - 1) \sin v + \rho(s) \kappa(s) (\rho'(s) \cos v + 1)\cos w,
 \\
 B & = -\rho(s) (\rho'(s) + \cos v)(2 f(s) (\rho'(s) + \cos v)\sin v + \kappa(s) \sin w) . 
\end{aligned}
\end{equation}
As in the previous case, computing the covariant derivative of equation \eqref{torsion-umbilic-h-nonzero-coords} in the direction of $\e_3$ yields a second linear relation between $u^1$ and $u^2$.  The analogous necessary condition for the existence of a common nonzero solution $(u^1, u^2)$ to this equation and \eqref{torsion-umbilic-h-nonzero-coords} is a large trigonometric polynomial in the variables $(v,w)$ whose coefficients are functions of $s$ that must all vanish identically.  In particular, the highest-order term of this polynomial is
\[ \rho(s) f(s) (1 + 4 \rho(s)^2 f(s)^2)\sin v \cos^8 v. \] 
Since $f(s)$ and $\rho(s)$ are nonzero, this term is never zero; therefore, there are no nonzero Beltrami fields in this case.

\subsection{Case 3: The coefficient of $p_1$ in \eqref{torsion-first-pass} is nonzero on $U$}
This condition means that $h_{11} \neq h_{22}$ on $U$, and so the level sets of $f$ have no umbilic points.  In this case, we can choose an adapted frame field on $U$ as follows: let $\e_3 = \frac{\nabla f}{|\nabla f|}$ as before, and at each point $\bfx \in U$, let $\e_1, \e_2$ be tangent to the principal directions of the level surface $\Sigma$ of $f$ passing through $\bfx$.  

It turns out to be convenient to set
\[ h_{11} = h_1 + h_2, \qquad h_{22} = h_1 - h_2, \]
with $h_2 \neq 0$.  Then the connection forms \eqref{expanded-connection-forms} may be written as
\begin{equation}\label{nonumbilic-MC-forms}
\begin{aligned}
\omega^3_1 & = (h_1 + h_2) \omega^1 + g_1 \omega^3, \\
\omega^3_2 & = (h_1 - h_2) \omega^2 + g_2 \omega^3, \\
\omega^1_2 & = k_1 \omega^1 + k_2 \omega^2 + k_3 \omega^3.
\end{aligned}
\end{equation}

Equation \eqref{torsion-first-pass} may be interpreted as defining a codimension 1 submanifold $M' \subset M$, diffeomorphic to $U \times \R^3$ (with coordinates $(u^1, u^2, p_2)$ on the $\R^3$ factor), with the property that any integral manifold of $(M, \calI)$ must be contained in $M'$. 
Thus we must replace $\calI$ with its pullback $\calI'$ to $M'$ and consider the system $(M', \calI')$.  Algebraically, this simply means that we solve equation \eqref{torsion-first-pass} for $p_1$ and substitute the resulting expression into the differential forms in $\calI$ to obtain the system $\calI'$.
This system is still generated by $\theta_1, \theta_2$ as in \eqref{define-I}, with $p_1$ determined by equation \eqref{torsion-first-pass}.  Modulo $\{\theta_1, \theta_2\}$, we now have
\begin{equation}\label{nonumbilic-dthetas-2}
\begin{aligned}
d\theta^1 & \equiv  - dp_2 \wedge \omega^2 + T^1_{ij} \omega^i \wedge \omega^j , \\
d\theta^2 & \equiv -dp_2 \wedge \omega^1  + T^2_{ij} \omega^i \wedge \omega^j,
\end{aligned}
\end{equation}
where now the $T^k_{ij}$ are functions on $M'$ involving the known functions $f, g, h_1, h_2, k_i$ on $U$ and their derivatives, as well as the unknowns $u^1, u^2, p_2$ on $M'$.

As before, the next step is to determine whether there exist functions $p_{21}, p_{22}, p_{23}$ on $M$ such that the 1-form
\[ \pi_2 = dp_2 - p_{2j} \omega^j \]
satisfies
\begin{equation}\label{dthetas-second-pass-absorbed}
\begin{aligned}
d\theta^1 & \equiv  - \pi_2 \wedge \omega^2 , \\
d\theta^2 & \equiv -\pi_2 \wedge \omega^1.
\end{aligned}
\end{equation}
Similarly to the previous case, a straightforward (but rather involved) computation shows that such functions exist (and hence the torsion can be absorbed) if and only if
\begin{equation}\label{torsion-second-pass}
(2 k_3 - f) p_2 - Z_1 u^1
- Z_2 u^2
 = 0,
\end{equation}
where $Z_1, Z_2$ are long, complicated expressions involving the known functions $f, g, h_1, h_2, k_i$ on $U$ and their derivatives, and whose explicit form is not particularly enlightening.
At this point, there are (at least in principle) three possibilities to consider: 

\subsubsection{Case 3.1: Equation \eqref{torsion-second-pass} holds identically on $M'$.}  
This happens if and only if the coefficients of $p_2, u^1$, and $u^2$ each vanish identically on $U$.  This system of 3 PDEs is overdetermined, and differentiating yields several additional PDEs that arise as compatibility conditions, and which must be differentiated in turn to check for still more compatibility conditions.  Unfortunately, this process rapidly becomes computationally impractical to continue, and we have been unable to carry it to completion in order to determine definitively whether or not this PDE system admits solutions. However, we conjecture that it does not. 

Nevertheless, suppose that there exists some function $f$ for which this condition holds.  Then there is a unique integral element at each point of $M'$; however, the system $(M' \calI')$ is not involutive and must be prolonged.  
Because there exists a unique integral element at each point, this amounts to adding the 1-form
\[ \theta^3 = \pi_2 = dp_2 -  p_{21} \omega^1 - p_{22} \omega^2 - p_{23} \omega^3 \]
to $\calI'$, where $p_{21}, p_{22}, p_{23}$ are the (unique in this case) functions determined by the torsion absorption condition.  
Then we must compute $d\theta^3 \equiv 0$ modulo $\{\theta^1, \theta^2, \theta^3\}$ to see whether this condition introduces any additional constraints.
\begin{itemize}
\item If $d\theta^3 \equiv 0  \mod{\{\theta^1, \theta^2, \theta^3\}}$ identically on $M'$, then the system $\calI'^{(1)} = \{\theta^1, \theta^2, \theta^3\}$ is Frobenius of rank 3, and there is a 3-dimensional family of integral manifolds.  
We note that the condition $d\theta^3 \equiv 0  \mod{\{\theta^1, \theta^2, \theta^3\}}$ represents 3 additional PDEs that must be satisfied by the known functions on $U$.  Even if the PDE system given by the vanishing of equation \eqref{torsion-second-pass} were to be satisfied, it seems extremely unlikely that these additional PDEs and their compatibility conditions would be satisfied as well; however, we cannot rule out the possibility entirely.

\item If $d\theta^3 \not\equiv 0  \mod{\{\theta^1, \theta^2, \theta^3\}}$, then the equation $d\theta^3 \equiv 0  \mod{\{\theta^1, \theta^2, \theta^3\}}$ defines a (possibly empty) submanifold $M'' \subset M'$ to which the non-prolonged ideal $\calI'$ must be pulled back.   If this submanifold is not empty, then it is defined by a relation of the form
\[ Y_0 p_2 + Y_1 u^1 + Y_2 u^2 = 0. \]
(The fact that $M''$ is defined by a single equation is a consequence of some of the PDEs obtained by differentiating the equations given by the vanishing of \eqref{torsion-second-pass}.)
\begin{itemize}
\item If $Y_0 \neq 0$, then this equation can be solved for $p_2$, and the pullback of $\calI'$ to $M''$ is a rank 2 system $\calI''$ that is either Frobenius (which requires that additional PDEs be satisfied), in which case there is a 2-dimensional family of integral manifolds, or the conditions $d\theta^1 \equiv d\theta^2 \equiv 0 \mod{\{\theta^1, \theta^2\}}$ define one or two algebraic relations between $u^1$ and $u^2$, which must also be differentiated to check for additional compatibility conditions.  In the latter case, there is at most a 1-dimensional space of integral manifolds.
\item If $Y_0 = 0$ (which represents an additional PDE), then this equation defines an algebraic relation between $u^1$ and $u^2$, and any solution $(u^1, u^2)$ to the PDE system \eqref{first-order-PDE-sys} must have the form
\[ u^1 = -Y_2 u, \qquad u^2 = Y_1 u \]
for some function $u:U \to \R$.  Substituting these expressions into the PDE system \eqref{first-order-PDE-sys} yields a system of 4 first-order PDEs for the unknown function $u$.  This system has both algebraic and differential compatibility conditions that must be satisfied in order for any nonzero solutions to exist. If all these conditions are satisfied, then there is exactly a 1-dimensional space of integral manifolds.
\end{itemize}
In either case, there are further PDEs in addition to the PDE system given by the vanishing of equation \eqref{torsion-second-pass} that must be satisfied in order for integral manifolds to exist.  
Again, we consider it extremely unlikely that there exist any solutions of this form, but we cannot rule out the possibility entirely.  This is the basis for the second statement in Conjecture \ref{nonconstant-Beltrami-conjecture}.
\end{itemize}

\subsubsection{Case 3.2: The coefficient $(2 k_3 - f)$ of $p_2$ in \eqref{torsion-second-pass} vanishes identically on $U$, but at least one of the other two coefficients does not.}  
Then equation \eqref{torsion-second-pass} takes the form
\[ Z_1 u^1 + Z_2 u^2 = 0, \]
and the analysis is similar to that in the last bullet point above, with the result that there is at most a 1-dimensional family of integral manifolds.  We do not know of any examples satisfying this condition, but we note that this condition imposes fewer PDEs on the known functions on $U$ than the previous case, so there may be examples of this form.

\subsubsection{Case 3.3: The coefficient $(2 k_3 - f)$ of $p_2$ in \eqref{torsion-second-pass} is nonzero on $U$.}
In this case, equation \eqref{torsion-second-pass} defines a codimension 1 submanifold $M'' \subset M'$, diffeomorphic to $U \times \R^2$ (with coordinates $(u^1, u^2)$ on the $\R^2$ factor), with the property that any integral manifold of $(M, \calI)$ must be contained in $M''$.  Thus we must replace $\calI'$ with its pullback $\calI''$ to $M''$ and consider the system $(M'', \calI'')$.  This system is still generated by $\theta_1, \theta_2$ as in \eqref{define-I}, with $p_1$ and $p_2$ determined by equations \eqref{torsion-first-pass} and \eqref{torsion-second-pass}.
Modulo $\{\theta_1, \theta_2\}$, we now have
\begin{equation}\label{nonumbilic-dthetas-3}
\begin{aligned}
d\theta^1 & \equiv  T^1_{ij} \omega^i \wedge \omega^j , \\
d\theta^2 & \equiv T^2_{ij} \omega^i \wedge \omega^j,
\end{aligned}
\end{equation}
where now the $T^k_{ij}$ are functions on $M''$ involving the known functions $f, g, h_1, h_2, k_i$ on $U$ and their derivatives, as well as the unknowns $u^1, u^2$ on $M''$.

There is now nowhere to absorb the torsion functions $T^i_{jk}$ if they are nonzero.  There are two scenarios under which the system $(M'', \calI'')$ may have nontrivial integral manifolds:
\begin{itemize}
\item If $d\theta^1 \equiv d\theta^2 \equiv 0  \mod{\{\theta^1, \theta^2\}}$---i.e., if all the torsion functions $T^k_{ij}$ vanish identically on $M''$---then the system $(M'', \calI'')$ is Frobenius of rank 2, and there is a 2-dimensional family of integral manifolds. This is precisely what happens in Example \ref{cylinders-ex}, where the level sets of $f$ are concentric circular cylinders.

\item If $d\theta^1, d\theta^2$ are not both identically zero modulo $\{\theta^1, \theta^2\}$, then the equations $T^k_{ij} = 0$ define a (possibly empty) submanifold $M''' \subset M''$ to which $\calI''$ must be pulled back.
If this submanifold is nonempty, then it is defined by a single equation of the form 
\[ X_1 u^1 + X_2 u^2 = 0, \]
where $X_1$ and $X_2$ are expressions involving the known functions $f, g, h_1, h_2, k_i$ on $U$ and their derivatives.  The remaining analysis is then similar to that in Case 3.2, and there is at most a 1-dimensional family of integral manifolds.
\end{itemize}
This completes the proof of Theorem \ref{nonconstant-Beltrami-theorem}.

One important open question remains: Which of the possibilities above corresponds to the generic case, where we might expect to find a space of Beltrami fields parametrized by 3 functions of 2 variables?  The third statement in Conjecture \ref{nonconstant-Beltrami-conjecture} reflects our belief that the generic case is the final case above, i.e., the case where $2k_3 - f \neq 0$ and the system $(M'', \calI'')$ is not Frobenius.  This conjecture is based on the observation that this case imposes the fewest constraints on $f$ and its associated functions on $U$.  Moreover, as we shall see in the following section, it is consistent with the classification of Beltrami fields that possess either a translation symmetry or a rotation symmetry.

\section{Beltrami fields with symmetry}\label{symmetry-sec}

In this section, we consider the simpler problem of classifying Beltrami fields $\bfu$ that possess either a translation symmetry or a rotation symmetry.  In both cases, we are able to give a complete classification of local Beltrami fields with the corresponding symmetry.

\subsection{Beltrami fields with a translation symmetry}\label{translation-subsec}

Assume that $\bfu$ is a Beltrami field that admits a translation symmetry. Without loss of generality, we will assume that
\[ \frac{\partial \bfu}{\partial x^3} = 0. \] 
It follows that $\frac{\partial f}{\partial x^3} = 0$ as well, and so the level surfaces of $f$ are cylinders over curves in the $(x^1, x^2)$ plane, with rulings parallel to the $x^3$-axis.  Moreover, the vector field $\e_1 = \frac{\partial}{\partial x^3}$ is a principal direction to each level surface of $f$ at every point $\bfx\in U$.  Since this vector field is constant on $U$, we have
\[ \omega^1_2 = \omega^3_1 = 0. \]
The vector fields $\e_2, \e_3$ may be written as
\[ \e_2 = \cos(\phi) \frac{\partial}{\partial x^1} + \sin(\phi) \frac{\partial}{\partial x^2}, \qquad
\e_3 = -\sin(\phi) \frac{\partial}{\partial x^1} + \cos(\phi) \frac{\partial}{\partial x^2} \]
for some function $\phi(x^1, x^2)$.  (Note that this function is determined by $f$; specifically, it is determined by the condition that $\nabla f$ is parallel to $\e_3$.)
 The dual forms are
\[ \omega^1 = dx^3, \qquad \omega^2 = \cos(\phi)\, dx^1 + \sin(\phi) \,dx^2, \qquad \omega^3 = -\sin(\phi)\, dx^1 + \cos(\phi)\, dx^2, \]
and the remaining connection form is given by
\[ \omega^3_2 = d\phi = \phi_2 \omega^2 + \phi_3 \omega^3. \]
Thus we have
\[ h_{11} = g_1 = k_1 = k_2 = k_3 = 0, \qquad h_{22} = \phi_2, \qquad g_2 = \phi_3. \]

Since we now have $u^1_1 = u^2_1 = 0$ by assumption, the PDE system \eqref{first-order-PDE-sys} reduces to the system
\begin{equation}\label{first-order-PDE-sys-translation-invariant}
\begin{aligned}
u^1_1  = & \ u^2_1  = u^1_2  = 0, \\
u^2_2 & = - \phi_3 u^2, \\
u^1_3 & = f  u^2, \\
u^2_3 & = \phi_2 u^2 - f u^1.
\end{aligned}
\end{equation}
In particular, all the first-order derivatives of $u^1$ and $u^2$ are determined, and we have a total differential system for these two unknown functions.  So, let $M = U \times \R^2$, with coordinates $(u^1, u^2)$ on the $\R^2$ factor, and
let $\calI$ be the differential ideal on $M$ generated by the 1-forms
\begin{equation}
\begin{aligned}
\theta^1 & = du^1 - f u^2\,  \omega^3, \\
\theta^2 & = du^2 + \phi_3\,\omega^2 + (f u^1 - \phi_2 u^2)\,\omega^3,
\end{aligned} \label{define-I-translation-invariant}
\end{equation} 
and their exterior derivatives.  Direct computation shows that, modulo $\{\theta^1, \theta^2\}$, we have
\begin{equation*}
\begin{aligned}
d\theta^1 & \equiv 0 \\
d\theta^2 & = \left( 2 \phi_3 f u^1 - (\phi_{22} + \phi_{33}) u^2 \right) \omega^2 \wedge \omega^3.
\end{aligned}
\end{equation*}
Thus, the torsion absorption condition is
\begin{equation}\label{torsion-translation-invariant}
2 \phi_3 f u^1 - (\phi_{22} + \phi_{33}) u^2 = 0.
\end{equation}
There are two possibilities to consider.
\begin{enumerate}
\item Equation \eqref{torsion-translation-invariant} is satisfied identically on $M$, in which case the system $(M, \calI)$ is Frobenius and there is a 2-dimensional space of integral manifolds.  This condition means that the coefficients of $u^1$ and $u^2$ must both vanish identically on $U$, which is the case if and only if
\[ \phi_3 = \phi_{22} + \phi_{33} = 0. \]
The condition $\phi_3 = 0$ implies that
\[ d\e_3(\e_3) = \e_2 \omega^2_3(\e_3)  = 0, \]
and hence that the integral curves of the vector field $\e_3$ in the $(x^1, x^2)$ plane are straight lines.  Moreover, $\phi_3 = 0$ implies that $\phi_{33} = 0$, and so the second equation reduces to $\phi_{22} = 0$.  This equation implies that the rate of change $\phi_2$ of the angle $\phi$ is constant (as a function of arc length) along each level curve of $f$ in the $(x^1, x^2)$ plane.  

Together, these conditions imply that the level curves of $f$ in the $(x^1, x^2)$ plane are either concentric circles (if $\phi_2 \neq 0$) or
parallel lines (if $\phi_2=0$). Hence, the level surfaces of $f$ are either concentric circular cylinders or parallel planes.  In the former case, these are exactly the Beltrami fields of Example \ref{cylinders-ex}; in the latter case, this shows that the infinite-dimensional space of Beltrami fields in Example \ref{f=z-ex} contains precisely a 2-dimensional subspace of Beltrami fields that admit a translation symmetry.

\item If equation \eqref{torsion-translation-invariant} does not vanish identically on $U$, then it defines an algebraic relationship between $u^1$ and $u^2$.  Any solution to the PDE system \eqref{first-order-PDE-sys-translation-invariant} must have the form
\begin{equation}\label{translation-invariant-general-solution}
 u^1 = (\phi_{22} + \phi_{33}) u, \qquad u^2 = 2 \phi_3 f u 
\end{equation}
for some function $u: U \to \R$.  Note that neither $u^1$ nor $u^2$ may vanish identically on $U$, since the differential equations \eqref{first-order-PDE-sys-translation-invariant} would then imply that the other one vanishes as well.  Thus, by restricting $U$ if necessary, we may assume that both coefficients $\phi_3, \phi_{22} + \phi_{33}$ are nonzero on $U$.

Substituting the expressions \eqref{translation-invariant-general-solution} into the PDE system \eqref{first-order-PDE-sys-translation-invariant} yields four algebraic equations for the two nontrivial first partial derivatives $u_2, u_3$.  The algebraic compatibility conditions that must be satisfied in order for these equations to admit solutions are
\begin{equation}\label{compat-conds-translation-invariant}
\begin{gathered}
(\phi_{22} + \phi_{33})_2 = \frac{1}{\phi_3} (\phi_{23} - \phi_2^2)(\phi_{22} + \phi_{33}), \\
(\phi_{22} + \phi_{33})_3 = \left(\frac{e^{-g}}{f} - \phi_2\right) (\phi_{22} + \phi_{33}) + 2 \phi_3 f^2 + \frac{1}{2\phi_3}\left( \phi_{22}^2 + 4 \phi_{22} \phi_{33} + 3 \phi_{33}^2 + 4 \phi_3^2 f_2 \right).
\end{gathered}
\end{equation}
Remarkably, these two PDEs for $\phi$, together with the equation $f_2 = 0$ (which says that $\nabla f$ is parallel to $\e_3$), are compatible, and the space of functions $f$ for which the associated function $\phi$ satisfies these equations is locally parametrized by 3 functions of 1 variable.  Even more remarkably, for any such $f$, the corresponding total differential system for $u$ imposes no additional conditions, and so there exists a 1-dimensional space of integral manifolds.

\end{enumerate}

Thus we have the following classification result for Beltrami fields that possess a translation symmetry:

\begin{theorem}\label{translation-invariant-theorem}
The space of Beltrami fields that possess a translation symmetry is locally parametrized by 3 functions of 1 variable.  Moreover:
\begin{itemize}
\item The space of proportionality factors $f$ admitting a nonzero Beltrami field with a translation symmetry is locally parametrized by 3 functions of 1 variable.
\item Any such function $f$ admits exactly a 1-dimensional space of Beltrami fields with a translation symmetry unless the level surfaces of $f$ are concentric circular cylinders or parallel planes, in which case $f$ admits exactly a 2-dimensional space of Beltrami fields with a translation symmetry. 
\end{itemize}

\end{theorem}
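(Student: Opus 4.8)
The plan is to assemble the theorem directly from the case analysis already carried out for the exterior differential system $(M,\calI)$ of \eqref{define-I-translation-invariant}, whose integral manifolds satisfying the independence condition correspond to translation-invariant Beltrami fields. Everything hinges on the single torsion absorption condition \eqref{torsion-translation-invariant}, namely $2\phi_3 f u^1 - (\phi_{22}+\phi_{33})u^2 = 0$, and the argument splits according to whether this vanishes identically on $M$ or not. I would prove the two bulleted statements by treating each case separately, and then read off the overall generality count for the headline statement at the end.

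First I would dispose of the degenerate case, in which \eqref{torsion-translation-invariant} holds identically. Since $f$ is nonvanishing this forces both coefficients to vanish, which (using that $\phi_3 = 0$ in turn forces $\phi_{33} = 0$) is equivalent to $\phi_3 = \phi_{22} = 0$. Reading these conditions through $\omega^3_2 = d\phi$ shows that the integral curves of $\e_3$ are straight lines and that $\phi_2$ is constant along each level curve, so the level curves of $f$ are concentric circles or parallel lines and the level surfaces are concentric circular cylinders or parallel planes. Here $\calI$ is Frobenius of rank $2$, giving exactly a $2$-dimensional space of integral manifolds and recovering Examples \ref{cylinders-ex} and \ref{f=z-ex}; this establishes the ``unless\ldots'' clause of the second bullet. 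The proportionality factors arising this way form only a one-function-of-one-variable family, which is subordinate to the count in the generic case.

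Next I would treat the generic case, where \eqref{torsion-translation-invariant} is not identically zero and hence defines the algebraic relation \eqref{translation-invariant-general-solution}, collapsing the two unknowns to a single function $u$ via $u^1 = (\phi_{22}+\phi_{33})u$ and $u^2 = 2\phi_3 f u$. Substituting into \eqref{first-order-PDE-sys-translation-invariant} produces an overdetermined first-order system whose algebraic solvability for the surviving derivatives $u_2, u_3$ forces precisely the compatibility equations \eqref{compat-conds-translation-invariant} on $\phi$. The core of the proof is then to show that \eqref{compat-conds-translation-invariant}, together with the equation $f_2 = 0$ expressing that $\nabla f$ is parallel to $\e_3$, forms a compatible system, and to compute its generality. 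I would encode this as a total differential system on a suitable jet bundle of the pair $(f,\phi)$, prolong by differentiating the two prescribed expressions for $(\phi_{22}+\phi_{33})_2$ and $(\phi_{22}+\phi_{33})_3$, and use the structure equations \eqref{structure-equations} to reduce every mixed covariant derivative; compatibility then amounts to verifying that the two independent computations of $(\phi_{22}+\phi_{33})_{23}$ agree modulo the system. Once this closes up, a Cartan character count (which I expect to give $s_1 = 3$, $s_2 = 0$) shows that the admissible $f$ depend locally on $3$ functions of $1$ variable, proving the first bullet. Finally I would check that the residual total differential system for $u$ is itself Frobenius, so that it imposes no further constraints and each such $f$ carries exactly a $1$-dimensional space of fields, completing the second bullet.

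The main obstacle is precisely this compatibility verification in the generic case: proving that the two second-order PDEs \eqref{compat-conds-translation-invariant} for $\phi$, coupled with $f_2 = 0$, prolong without generating any new constraint, and extracting the correct characters. This is the step described as ``remarkable'' above, and it is delicate because the covariant derivatives in the directions $\e_2, \e_3$ do not commute---their commutator is controlled by the connection data $g_2 = \phi_3$ and $h_{22} = \phi_2$---so the closure depends on an exact cancellation of these torsion terms that is not visible a priori. Assembling the final count is then routine bookkeeping: the generic stratum contributes $3$ functions of $1$ variable worth of proportionality factors, each carrying a $1$-parameter family of fields (an extra constant, negligible in the hierarchy), while the degenerate stratum contributes only a $2$-dimensional family over a strictly smaller set of $f$; hence the space of all translation-invariant Beltrami fields is locally parametrized by $3$ functions of $1$ variable, which is the headline assertion.
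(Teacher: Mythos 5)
Your proposal follows essentially the same route as the paper: the same torsion absorption condition \eqref{torsion-translation-invariant}, the same dichotomy between the Frobenius case (concentric cylinders or parallel planes, 2-dimensional solution space) and the generic case where the relation \eqref{translation-invariant-general-solution} reduces everything to a single function $u$ whose overdetermined system forces the compatibility equations \eqref{compat-conds-translation-invariant}, yielding 3 functions of 1 variable worth of admissible $f$, each with a 1-dimensional space of fields. The only difference is that you spell out a prolongation/character computation for the compatibility of \eqref{compat-conds-translation-invariant} with $f_2=0$, which the paper simply asserts as a ``remarkable'' computational fact; this is a reasonable way to fill in that step and does not change the argument.
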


It turns out that we can actually describe these Beltrami fields fairly explicitly by working with the original coordinate-based PDE system \eqref{Beltrami-sys-expanded}.  With the symmetry assumption, this system reduces to
\begin{equation}\label{Beltrami-sys-expanded-translation-invariant}
\begin{gathered}
-\frac{\partial u^3}{\partial x^2} = f u^1, \\
\frac{\partial u^3}{\partial x^1} = f u^2, \\
\frac{\partial u^1}{\partial x^2} - \frac{\partial u^2}{\partial x^1} = f u^3, \\
\frac{\partial u^1}{\partial x^1} + \frac{\partial u^2}{\partial x^2} = 0.
\end{gathered}
\end{equation}
The last equation in \eqref{Beltrami-sys-expanded-translation-invariant} implies that we must have 
\[ u^1 = -\frac{\partial H}{\partial x^2}, \qquad u^2 = \frac{\partial H}{\partial x^1} \]
for some function $H(x^1, x^2)$.
Substituting these expressions into the first two equations in \eqref{Beltrami-sys-expanded-translation-invariant} yields
\begin{equation}\label{translation-invariant-intermediate-step}
 -\frac{\partial u^3}{\partial x^2} = -f \frac{\partial H}{\partial x^2},
 \qquad 
 \frac{\partial u^3}{\partial x^1} = f \frac{\partial H}{\partial x^1}. 
\end{equation}
In particular, $\nabla u^3$ is parallel to $\nabla H$, which implies that
\[ u^3 = \Phi \circ H \]
for some function $\Phi:I \subset \R \to \R$.
Moreover, equations \eqref{translation-invariant-intermediate-step} imply that
\[ f = \Phi' \circ H. \]
Finally, the third equation in \eqref{Beltrami-sys-expanded-translation-invariant} implies that
\[ \Delta H = -f u^3 = - (\Phi' \circ H) (\Phi \circ H). \]

So the general Beltrami field with a translation symmetry can be constructed as follows:
\begin{enumerate}
\item Choose a function $\Phi: I \subset \R \to \R$.  
\item Let $H:U \subset \R^2 \to \R$ be a solution of the PDE
\begin{equation}\label{Phi-H-PDE}
 \Delta H = - (\Phi' \circ H) (\Phi \circ H). 
\end{equation}
\item Then the vector field
\[ \bfu = -\frac{\partial H}{\partial x^2} \frac{\partial}{\partial x^1} + \frac{\partial H}{\partial x^1} \frac{\partial}{\partial x^2} + (\Phi \circ H) \frac{\partial}{\partial x^3} \]
is a Beltrami field with proportionality factor 
\[ f = \Phi' \circ H. \]
\end{enumerate}
Note that this construction agrees with the function count given in Theorem \ref{translation-invariant-theorem}: The arbitrary function $\Phi$ represents 1 function of 1 variable, and for each function $\Phi$ the solution space of the PDE \eqref{Phi-H-PDE} for $H$ is locally parametrized by 2 functions of 1 variable, giving an overall solution space locally parametrized by a total of 3 functions of 1 variable.

\subsection{Beltrami fields with a rotation symmetry}\label{rotation-subsec}

Assume that $\bfu$ is a Beltrami field that admits a rotation symmetry.  
Without loss of generality, we will assume that the $z$-axis is the axis of symmetry; then this assumption means that the components of $\bfu$ with respect to the standard orthonormal cylindrical frame field are independent of the angle coordinate $\theta$.
It follows that $\frac{\partial f}{\partial \theta} = 0$ as well, and so the level surfaces of $f$ are surfaces of revolution about the $z$-axis. 
Moreover, the vector field $\e_1 = \frac{1}{r} \frac{\partial}{\partial \theta}$ is a principal direction to each level surface of $f$ at every point $\bfx\in U$.
The vector fields $\e_2, \e_3$ may be written as
\[ \e_2 = \cos(\phi) \frac{\partial}{\partial r} + \sin(\phi) \frac{\partial}{\partial z}, \qquad
\e_3 = -\sin(\phi) \frac{\partial}{\partial r} + \cos(\phi) \frac{\partial}{\partial z} \]
for some function $\phi(r,z)$, which is determined by the condition that $\nabla f$ is parallel to $\e_3$.
 The dual forms are
\[ \omega^1 = r\, d\theta, \qquad \omega^2 = \cos(\phi)\, dr + \sin(\phi) \,dz, \qquad \omega^3 = -\sin(\phi)\, dr + \cos(\phi)\, dz, \]
and the connection forms are
\[ \omega^1_2 = \frac{\cos(\phi)}{r}\omega^1, \qquad \omega^3_1 = \frac{\sin(\phi)}{r}\omega^1, \qquad \omega^3_2 = d\phi = \phi_2\, \omega^2 + \phi_3\, \omega^3. \]
Thus we have
\[ g_1 = k_2 = k_3 = 0, \qquad h_{11} = \frac{\sin(\phi)}{r}, \qquad k_1 = \frac{\cos(\phi)}{r}, \qquad
 h_{22} = \phi_2, \qquad g_2 = \phi_3. \]

Since we now have $u^1_1 = u^2_1 = 0$ by assumption, the PDE system \eqref{first-order-PDE-sys} reduces to the system
\begin{equation}\label{first-order-PDE-sys-rotation-invariant}
\begin{aligned}
u^1_1 & = \ u^2_1 = 0, \\
u^1_2 & = -\frac{\cos(\phi)}{r} u^1, \\
u^2_2 & = - \left(\frac{\cos(\phi)}{r} + \phi_3\right) u^2, \\
u^1_3 & =  \frac{\sin(\phi)}{r} u^1 +  f  u^2, \\
u^2_3 & = \phi_2 u^2 - f u^1.
\end{aligned}
\end{equation}
The analysis of the corresponding exterior differential system is similar to that in the previous subsection, with the following results:
\begin{enumerate}
\item If there is a 2-dimensional space of integral manifolds, then
\[ \phi_3 = r^2 \phi_{22} - r \cos (\phi)\, \phi_2 + \cos(\phi) \sin(\phi) = 0. \]
Differentiating the second equation in the $\e_3$ direction (and taking into account that $\phi_3=0$) yields
\[ (2r\sin(\phi) - 3r^2 \phi_2) \phi_{22} + r\cos(\phi) \phi_2^2 - \cos(\phi) \sin(\phi) \phi_2 = 0. \]
Taken together, these equations imply that either $\cos(\phi)=0$, or
\begin{equation}\label{rot-inv-2d}
 \phi_3 = \phi_{22} = 0, \qquad \phi_2 = \frac{\sin(\phi)}{r}. 
\end{equation}
In the latter case, differentiating the last equation in \eqref{rot-inv-2d} in the $\e_2$ direction (and taking into account that $\phi_{22} = 0$) yields
\[ \cos(\phi) \sin(\phi) = 0. \]
Thus the only solutions are $\phi = \frac{\pi}{2}$, in which case the level surfaces of $f$ are concentric circular cylinders, or $\phi = 0$, in which case the level surfaces of $f$ are parallel planes.  In the former case, these are exactly the Beltrami fields of Example \ref{cylinders-ex}; in the latter case, this shows that the infinite-dimensional space of Beltrami fields in Example \ref{f=z-ex} contains precisely a 2-dimensional subspace of Beltrami fields that admit a rotation symmetry.

\item The space of functions $f$ admitting a 1-dimensional space of integral manifolds is locally parametrized by 3 functions of 1 variable.

\end{enumerate}

Thus we have the following classification result for Beltrami fields that possess a rotation symmetry:

\begin{theorem}\label{rotation-invariant-theorem}
The space of Beltrami fields that possess a rotation symmetry is locally parametrized by 3 functions of 1 variable.  Moreover:
\begin{itemize}
\item The space of proportionality factors $f$ admitting a nonzero Beltrami field with a rotation symmetry is locally parametrized by 3 functions of 1 variable.
\item Any such function $f$ admits exactly a 1-dimensional space of Beltrami fields with a rotation symmetry unless the level surfaces of $f$ are concentric circular cylinders or parallel planes, in which case $f$ admits exactly a 2-dimensional space of Beltrami fields with a rotation symmetry.
\end{itemize}

\end{theorem}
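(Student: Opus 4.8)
The plan is to flesh out the exterior differential system analysis alluded to just before the statement, following the template of the translation-symmetric case in \S\ref{translation-subsec}. Because the rotation-symmetry hypothesis forces $u^1_1 = u^2_1 = 0$ and \eqref{first-order-PDE-sys-rotation-invariant} prescribes \emph{all} of the remaining first partial derivatives of $u^1$ and $u^2$, the system is a total differential system with no free jet coordinates. Accordingly, I would set $M = U \times \R^2$ with fiber coordinates $(u^1, u^2)$ and take $\calI$ to be the differential ideal generated by
\[
\begin{aligned}
\theta^1 &= du^1 + \frac{\cos\phi}{r}\, u^1\, \omega^2 - \Bigl(\frac{\sin\phi}{r}\, u^1 + f u^2\Bigr)\omega^3, \\
\theta^2 &= du^2 + \Bigl(\frac{\cos\phi}{r} + \phi_3\Bigr) u^2\, \omega^2 - (\phi_2 u^2 - f u^1)\,\omega^3,
\end{aligned}
\]
read off directly from \eqref{first-order-PDE-sys-rotation-invariant}, together with their exterior derivatives. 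Using the structure equations \eqref{structure-equations} with the connection forms recorded above, I would compute $d\theta^1$ and $d\theta^2$ modulo the algebraic ideal $\{\theta^1, \theta^2\}$. Since the fibers carry no $p_i$-type coordinates, there is nowhere to absorb torsion; the coefficients of the surviving $\omega^i \wedge \omega^j$ terms are integrability conditions that are linear in $(u^1, u^2)$, exactly as the single relation \eqref{torsion-translation-invariant} arose in the translation case.

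I would then split into two cases according to whether these conditions hold identically. If every torsion coefficient vanishes identically on $U$, then $\calI$ is Frobenius of rank $2$ on the $5$-manifold $M$, its leaves are $3$-dimensional, and there is a $2$-dimensional space of integral manifolds. Here I would verify that the vanishing conditions reduce to
\[ \phi_3 = r^2\phi_{22} - r\cos\phi\,\phi_2 + \cos\phi\sin\phi = 0, \]
differentiate the second relation in the $\e_3$ direction (using $\phi_3 = 0$) to obtain the stated secondary condition, and combine the two to force either $\cos\phi = 0$ or the system \eqref{rot-inv-2d}. In the latter case, differentiating $\phi_2 = \frac{\sin\phi}{r}$ in the $\e_2$ direction and using $\phi_{22} = 0$ yields $\cos\phi\sin\phi = 0$, so that $\phi \equiv \frac{\pi}{2}$ (concentric circular cylinders, the fields of Example \ref{cylinders-ex}) or $\phi \equiv 0$ (parallel planes, the $2$-dimensional subfamily inside Example \ref{f=z-ex}); I would also check that the $\cos\phi = 0$ branch produces nothing beyond the cylinder geometry.

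In the complementary case, the torsion defines a nonzero linear relation $X_1 u^1 + X_2 u^2 = 0$ on $U$, forcing every solution to be a scalar multiple of a fixed pair, say $u^1 = -X_2 u$ and $u^2 = X_1 u$ for a single function $u : U \to \R$. Substituting back into \eqref{first-order-PDE-sys-rotation-invariant} produces an overdetermined first-order system for $u$ whose algebraic compatibility conditions are PDEs on $\phi$ alone, the rotation-symmetric analogues of \eqref{compat-conds-translation-invariant}. The crux is to show that these PDEs, together with the equation expressing that $\nabla f$ is parallel to $\e_3$, are compatible with general solution locally parametrized by $3$ functions of $1$ variable, and that for each such $f$ the residual total differential system for $u$ is automatically Frobenius, giving exactly a $1$-dimensional solution space. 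Combining ``$3$ functions of $1$ variable for $f$'' with the negligible $1$-dimensional fiber then yields the claimed parametrization of the full space of rotation-symmetric Beltrami fields and of the admissible proportionality factors.

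The main obstacle is precisely this compatibility count in the generic case. As in \S\ref{translation-subsec}, proving that the derived PDEs on $\phi$ are compatible and carry a solution space of exactly $3$ functions of $1$ variable requires running Cartan's involutivity test (or an equivalent Cauchy-Kowalevski count) on a system whose coefficients are the unwieldy expressions $X_1, X_2$ and their covariant derivatives; this is where the \emph{remarkable} compatibility already encountered in the translation case must be re-verified, almost certainly by computer-assisted computation. Everything else is bookkeeping: reading off the ideal, computing the torsion, and the two elementary differentiations that pin down the cylinder/plane stratum in the $2$-dimensional case.
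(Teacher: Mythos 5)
Your proposal follows essentially the same route as the paper, whose own proof consists of exactly this exterior differential system analysis (carried out, in the paper's words, ``similarly to the previous subsection''), with the same case split, the same characterization of the two-dimensional stratum via $\phi_3 = r^2\phi_{22} - r\cos\phi\,\phi_2 + \cos\phi\sin\phi = 0$, and the same reliance on a computer-assisted compatibility count in the generic case. The only thing you omit is the paper's supplementary explicit construction---writing $ru^1 = -\partial H/\partial z$, $ru^3 = \partial H/\partial r$, $ru^2 = \Phi\circ H$, $f = \Phi'\circ H$ with $H$ solving the Grad--Shafranov-type equation \eqref{Phi-H-PDE-rot}---which independently confirms the ``3 functions of 1 variable'' count that your argument leaves to machine verification.
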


Again, it turns out that we can actually describe these Beltrami fields fairly explicitly by working with the original PDE system \eqref{Beltrami-def} in cylindrical coordinates.  If we write $\bfu$ in terms of the orthonormal cylindrical frame field as
\[ \bfu = u^1 \frac{\partial}{\partial r} + u^2 \frac{1}{r} \frac{\partial}{\partial \theta} + u^3 \frac{\partial}{\partial z}, \]
then the system \eqref{Beltrami-def} is equivalent to the PDE system 
\begin{equation}\label{Beltrami-sys-cylindrical}
\begin{gathered}
\frac{\partial u^3}{\partial \theta} - \frac{\partial (ru^2)}{\partial z} = f (r u^1), \\
\frac{\partial u^1}{\partial z} - \frac{\partial u^3}{\partial r} = f u^2, \\
\frac{\partial (r u^2)}{\partial r} - \frac{\partial u^1}{\partial \theta} = f (ru^3), \\
\frac{\partial (ru^1)}{\partial r} + \frac{\partial u^2}{\partial \theta} + \frac{\partial (ru^3)}{\partial z} = 0.
\end{gathered}
\end{equation}
With the symmetry assumption, this system reduces to
\begin{equation}\label{Beltrami-sys-expanded-rotation-invariant}
\begin{gathered}
 -\frac{\partial (ru^2)}{\partial z} = f (r u^1), \\
\frac{\partial u^1}{\partial z} - \frac{\partial u^3}{\partial r} = f u^2, \\
\frac{\partial (r u^2)}{\partial r} = f (ru^3), \\
\frac{\partial (ru^1)}{\partial r}  + \frac{\partial (ru^3)}{\partial z} = 0.
\end{gathered}
\end{equation}
The last equation in \eqref{Beltrami-sys-expanded-rotation-invariant} implies that we must have 
\[ ru^1 = -\frac{\partial H}{\partial z}, \qquad ru^3 = \frac{\partial H}{\partial r} \]
for some function $H(r,z)$.
Substituting these expressions into the first and third equations in \eqref{Beltrami-sys-expanded-rotation-invariant} yields
\begin{equation}\label{rotation-invariant-intermediate-step}
 -\frac{\partial (ru^2)}{\partial z} = -f \frac{\partial H}{\partial z},
 \qquad 
 \frac{\partial (ru^2)}{\partial r} = f \frac{\partial H}{\partial r}. 
\end{equation}
In particular, $\nabla (ru^2)$ is parallel to $\nabla H$, which implies that
\[ ru^2 = \Phi \circ H \]
for some function $\Phi:I \subset \R \to \R$.
Moreover, equations \eqref{rotation-invariant-intermediate-step} imply that
\[ f = \Phi' \circ H. \]
Finally, the second equation in \eqref{Beltrami-sys-expanded-rotation-invariant} implies that
\[ \frac{\partial}{\partial r} \left( \frac{1}{r} \frac{\partial H}{\partial r}\right) + \frac{\partial}{\partial z} \left( \frac{1}{r} \frac{\partial H}{\partial z}\right) = -f u^2 = -\frac{1}{r} (\Phi' \circ H) (\Phi \circ H). \]

So the general Beltrami field with a rotation symmetry can be constructed as follows:
\begin{enumerate}
\item Choose a function $\Phi: I \subset \R \to \R$.  
\item Let $H:U \subset \R^2 \to \R$ be a solution of the PDE
\begin{equation}\label{Phi-H-PDE-rot}
 \frac{\partial}{\partial r} \left( \frac{1}{r} \frac{\partial H}{\partial r}\right) + \frac{\partial}{\partial z} \left( \frac{1}{r} \frac{\partial H}{\partial z}\right) =  -\frac{1}{r} (\Phi' \circ H) (\Phi \circ H). 
\end{equation}
\item Then the vector field
\[ \bfu = -\frac{1}{r} \frac{\partial H}{\partial z} \frac{\partial}{\partial r} + \frac{1}{r^2}(\Phi \circ H) \frac{\partial}{\partial \theta} + \frac{1}{r} \frac{\partial H}{\partial r} \frac{\partial}{\partial z} \]
is a Beltrami field with proportionality factor 
\[ f = \Phi' \circ H. \]
\end{enumerate}
As in the translation symmetry case, this construction agrees with the function count given in Theorem \ref{rotation-invariant-theorem}: The arbitrary function $\Phi$ represents 1 function of 1 variable, and for each function $\Phi$ the solution space of the PDE \eqref{Phi-H-PDE-rot} for $H$ is locally parametrized by 2 functions of 1 variable, giving an overall solution space locally parametrized by a total of 3 functions of 1 variable.

\bibliographystyle{amsplain}
\bibliography{Beltrami-bib}

\end{document}